\documentclass[12pt]{article}
\usepackage{amsfonts,amsthm,latexsym,amsmath,amssymb,amscd,epsfig,psfrag,enumerate}
\usepackage{graphics,graphicx, bezier, float, color}
\usepackage{graphicx}
\usepackage{caption}

\newtheorem{theo+}           {Theorem}
\newtheorem{prop+}           {Proposition}
\newtheorem{coro+}           {Corollary}
\newtheorem{lemm+}           {Lemma}

\newtheorem{ques+}{Question}
\theoremstyle{definition}

\newtheorem{not+}            {Notation}

\newtheorem{Ex}              {Example}[section]

\newtheorem*{ack}            {Acknowledgements}

\newtheorem{rema+}           {Remark}
\captionsetup{tablename=Chart}
\parindent=0pt
\textheight =23cm
\textwidth =15cm
\topmargin =1cm
\evensidemargin=0.5cm
\oddsidemargin=0.5cm
\voffset =-0.5cm
\hoffset =-0.25cm
\lineskip =0.0cm
\parskip =0pt

\newenvironment{theorem}{\begin{theo+}}{\end{theo+}}
\newenvironment{proposition}{\begin{prop+}}{\end{prop+}}
\newenvironment{corollary}{\begin{coro+}}{\end{coro+}}
\newenvironment{lemma}{\begin{lemm+}}{\end{lemm+}}

\newcommand{\bq}{\begin{eqnarray}}
\newcommand{\eq}{\end{eqnarray}}
\newcommand{\beq}{\begin{eqnarray*}}
\newcommand{\eeq}{\end{eqnarray*}}

\newtheorem{definition}{\bf Definition}[section]

\newtheorem{claim}{\bf Claim}[section]

\date{}

\linespread{1.1}
\begin{document}

\thispagestyle{empty}
\title{Morphic elements in regular near-rings}
\author{Alex Samuel Bamunoba, Ivan Philly Kimuli\footnote{Corresponding author}~   and David Ssevviiri}
\maketitle
\begin{center}
Department of Mathematics, Makerere University,\\
P.O. Box 7062, Kampala Uganda\\
E-mail addresses:  bamunoba@cns.mak.ac.ug; kpikimuli@gmail.com;  david.ssevviiri@mak.ac.ug
\end{center}
\begin{abstract}
\noindent We define morphic near-ring elements and study their behavior in regular near-rings. We show that the class of left morphic regular near-rings is properly contained between the classes of left strongly regular and unit regular near-rings.
\end{abstract}

\textbf{Keywords}: Morphic regular, strongly regular,  unit regular

\textbf{Mathematics Subject Classification (2020)}: 16Y30, 16E50

\maketitle



\maketitle
\section{Introduction}

Let $R$ be a ring, $M$ be an $R$-module and $A:=\text{End}_{R}(M)$ be the ring of $R$-module endomorphisms of $M$.  An element $\alpha$ of $A$ is called {\it regular} if there exists a $\beta\in A$ such that $\alpha \beta\alpha=\alpha$.     A regular element $\alpha$ is called {\it unit-regular} if $\beta$ can be chosen to be an $R$-automorphism of $M$, i.e., $\beta\in {\rm Aut}_R(M)$.  If every element in $A$ is unit-regular, then $A$ is a unit-regular ring.
In two successive papers, \cite{ehrlich1968unit} and \cite{ehrlich1976units}, G. Ehrlich studied the class of unit-regular rings.
She  showed that $\alpha\in A$ is unit-regular if and only if $\alpha$ is regular and has the property that $M/\text{Im}(\alpha) \cong \ker(\alpha)$.  Note that, the property  $M/\text{Im}(\alpha) \cong \ker(\alpha)$  is  the {\it Dual of the First Isomorphism Theorem}, $M/\ker(\alpha)\cong \text{Im}(\alpha)$, for $\alpha \in A$.  Ehrlich's result sparked further studies concerning this dual.\\

\noindent Let $a\in R$.  Following the case for $M ={}_{R}R$ in which $\alpha(r)=ra$ for all $r\in R$, we denote the  left and right annihilators of $a$  by $(0:_la)$ and $(0:_ra)$ respectively.  It is known that for any  $a\in R,~ R/(0:_l a)\cong Ra$.  By dualisiling this isomorphism, W. K. Nicholson and C. E. S$\acute{\text{a}}$nchez  in \cite{nicholson2004rings}  introduced the notion of morphic elements. An element $a$ in $R$ is said to be {\it left morphic} if $R/Ra\cong (0:_l a)$.  A ring is called  left morphic if each of its elements is left morphic, i.e., if  for every $a\in R$, the Dual of the First Isomorphism Theorem for an endomorphism $r\mapsto ra$ of $R$  holds.  Right morphic elements and right morphic rings are defined analogously.  We say that $R$ is morphic if it is both left and right morphic.  With  appropriate modifications, morphic modules, morphic groups and morphic group-rings were  defined.  These studies exist in \cite{chen, lee2007morphic, lee2010theorem,  li2010morphic,  nicholson2005morphic} among others.  In this work we extend this study to near-rings.\\

\noindent Let $N$ be a near-ring.  By defining left morphic near-ring elements, several properties   are obtained.
For instance, we  establish that where as a right near-ring is in general not Abelian as a group under addition and also
 lacks left-distributivity of multiplication over addition; existence of a left morphic idempotent element in a zero-symmetric right near-ring allows the two mentioned conditions above to hold under some special   situations.  In particular, if $e$  is a left morphic idempotent  element of a zero-symmetric near-ring $N$, then for all $x\in N$,
$$x(1-e)=x-xe=-xe+x.$$
Secondly, we establish in Theorem~\ref{217} that an idempotent $e$ of a  zero-symmetric
near-ring $N$ is left morphic if and only if   $e$ and its supplement, $1-e$, are  orthogonal   and their left annihilator sets have only the zero element in common.  Moreover, every left morphic idempotent element of a  near-ring is morphic.
Surprisingly, every idempotent need not be left morphic.   It turns out that there are many results already known in left morphic rings that do not carry over to left morphic near-rings.  For instance, for rings, Theorem~\ref{T}  was proved.

\begin{theorem}{\rm \cite[Ehrlich]{ehrlich1976units}}\label{T}
A ring $R$  is unit regular if and only if it is both regular and left morphic.
\end{theorem}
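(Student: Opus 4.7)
The plan is to prove both implications by converting regularity-with-morphic-data into an explicit unit, using the idempotents $ua$ and $au$ (respectively $ba$ and $ab$) and a module isomorphism between their complementary summands.

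For the forward implication, suppose $R$ is unit regular, so given $a\in R$ there is $u\in R^{\times}$ with $aua=a$. Regularity is immediate. For the left morphic condition I would first observe that $e:=ua$ and $f:=au$ are idempotents (using $aua=a$), and then check the two identifications $Ra=Re$ and $(0:_l a)=R(1-f)$, both of which follow directly from $ae=a$, $fa=a$, and the fact that $\ell(g)=R(1-g)$ for any idempotent $g$. Since $R=Re\oplus R(1-e)$, one has $R/Ra\cong R(1-e)$. The key step is establishing a left $R$-module isomorphism $R(1-e)\cong R(1-f)$; for this I would use the identity $(1-ua)u=u(1-au)$ to show that right multiplication by $u$ carries $R(1-ua)$ bijectively onto $R(1-au)$, with inverse given by right multiplication by $u^{-1}$. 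Composing yields $R/Ra\cong (0:_l a)$, as required.

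For the converse, assume $R$ is regular and left morphic and fix $a\in R$. Choose any $b'$ with $ab'a=a$ and replace it by the reflexive inverse $b:=b'ab'$, so that $aba=a$ and $bab=b$. Then $e:=ba$ and $f:=ab$ are idempotents with $Ra=Re$ and $(0:_l a)=R(1-f)$. Combining the splitting $R/Re\cong R(1-e)$ with the morphic isomorphism $R/Ra\cong (0:_l a)$ yields a left $R$-module isomorphism $\phi\colon R(1-e)\to R(1-f)$. Set $c:=\phi(1-e)\in R(1-f)$ and $d:=\phi^{-1}(1-f)\in R(1-e)$. Well-definedness of $\phi$ and $\phi^{-1}$ forces $bac=0$ and $abd=0$, so in fact $c\in(1-e)R(1-f)$ and $d\in(1-f)R(1-e)$, and the compositions $\phi\phi^{-1}=\mathrm{id}$, $\phi^{-1}\phi=\mathrm{id}$ translate into the matrix-style relations
\[
cd=1-e,\qquad dc=1-f.
\]

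I then claim $u:=b+c$ is a unit with inverse $a+d$, and that $aua=a$. Since $(1-f)a=a-aba=0$ and $a(1-e)=0$, the memberships $c\in R(1-f)$, $d\in R(1-e)$ give $ca=0=ac$; hence $aua=aba+aca=a$. Reflexivity of $b$ supplies the remaining vanishing: from $d\in(1-f)R(1-e)$ and $bf=b$ we get $bd=0$, and symmetrically $db=0$. Plugging in,
\[
u(a+d)=ba+bd+ca+cd=ba+0+0+(1-e)=1,\qquad (a+d)u=ab+ac+db+dc=1,
\]
so $u\in R^{\times}$, and $a$ is unit regular.

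The main obstacle is the converse direction: turning the abstract isomorphism $\phi$ produced by the morphic hypothesis into a genuine unit of $R$. Two moves make this go through. First, passing from an arbitrary regular inverse to a reflexive one is what kills the cross-terms $bd$ and $db$; without $bab=b$ the element $b+c$ need not be invertible. Second, one must choose $c$ and $d$ as the distinguished images $\phi(1-e)$ and $\phi^{-1}(1-f)$, because it is precisely these choices that carry the "corner" annihilator data $bac=0$, $abd=0$, $cab=0$, $dba=0$ required for the arithmetic above. The forward direction is essentially bookkeeping around the twin idempotents $ua$ and $au$.
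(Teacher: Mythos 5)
Your proof is correct in both directions; I checked the identities $cd=1-e$, $dc=1-f$, $ca=ac=0$, $bd=db=0$ and they all hold, so $b+c$ and $a+d$ are indeed mutually inverse and $a(b+c)a=a$. Be aware, though, that the paper offers no proof of this statement to compare against: Theorem~\ref{T} is quoted from Ehrlich as known background. The closest in-house relative is Theorem~\ref{62}, the near-ring analogue of your harder direction, and the two arguments genuinely differ. The paper works with the companion element $b$ of Lemma~\ref{1} satisfying $Na=(0:_lb)$ and $Nb=(0:_la)$, forms the explicit candidate $u=xax+b$, and proves invertibility indirectly by showing $(0:_lu)=0$ and appealing to Proposition~\ref{64}; you instead extract the off-diagonal elements $c=\phi(1-e)$ and $d=\phi^{-1}(1-f)$ from the abstract isomorphism $R/Ra\cong(0:_la)$ and exhibit a two-sided inverse outright. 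Your route is the classical ring-theoretic one and buys an explicit inverse, but it leans on Peirce decompositions $R=Re\oplus R(1-e)$ and on left-linearity of right multiplication -- exactly the tools unavailable in a near-ring -- which is why the paper's version is engineered differently and why the equivalence itself breaks for near-rings (Examples~\ref{20} and~\ref{20c}). One cosmetic quibble: the relations $bac=0$ and $cab=0$ come from left-linearity of $\phi$ applied to the idempotent generator ($c=\phi((1-e)^2)=(1-e)c$) together with $c\in R(1-f)$, not from ``well-definedness'' per se; the conclusion you draw from them is nonetheless right.
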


\noindent The group-theoretic analogue of Theorem \ref{T} was proved by Y. Li and W. K. Nicholson in \cite{Li}.
Here we  show that the analogue of Theorem \ref{T} does not hold for near-rings.
Whereas a left morphic regular zero-symmetric near-ring is unit-regular, see Theorem~\ref{62}, we show in Examples~\ref{20}  that the converse does not hold in general.   Also, whereas zero-symmetric left strongly regular near-rings are left morphic, see Proposition~\ref{ff}, the converse is not true in general as shown in Example~\ref{gggg}.
We establish and classify  left morphic regular near-rings.  The relationship between left strongly regular, left morphic regular and unit-regular near-rings is studied.  It is explicitly illustrated that the class of left morphic regular near-rings properly lies between the classes of left strongly regular and unit-regular near-rings.

\section{ Preliminaries on right near-rings}
A   {\it right  near-ring} $(N,+,\star)$ is a non-empty set $N$ with two binary operations `$+$' and `$\star$' such that $(N,+)$ is a group, (not necessarily Abelian) with identity $0,~ (N,\star)$ is a semigroup and `$\star$' satisfies the right distributive
law with respect to `$+$', i.e., $(x + y)\star z = x\star z + y\star z$ for all $x, y, z \in N$.   One often writes $xy$  for $x\star y$.
A natural example of a right near-ring is the following:  Let $G$ be a group, written additively but not necessarily Abelian, and  $M(G)$  be the set of all mappings from $G$ to $G$.   Given $f,g$ in $M(G)$, the mapping $f+g$ from $G$ to $G$ is defined as $(f+g)(x)=f(x)+g(x)$.    The product $f\star g$, of two elements, is their composition $f(g(x))$,  for all $x$ in $G$.  Then $(M(G),+,\star)$ is a right near-ring.
A {\it near-field} is a near-ring in which there is a multiplicative identity and every non-zero element has a multiplicative inverse.  As an immediate consequence of the right distributive law, we have $0x=0$ for every $x$ in $N$ but it is not necessarily true that $x0=0$ for every $x$ in $N$.  However, if $x0=0$ for every $x\in N$, then $N$ is zero-symmetric.\\

\noindent Let $a\in N$.  For a right near-ring $N$,   the mapping $r \mapsto ra$ from $N$ to $N$
determines an endomorphism of $(N, +)$ whose kernel coincides with  $(0:_la)$ and its image with $Na$.

\begin{definition}Let $N$ be a right near-ring.
{A left $N$-\emph{module} $M$ is a group, $(M, +)$,  written additively but not necessarily Abelian together with a map $N \times M \rightarrow M,(r, m) \mapsto rm,$
such that for all $m\in M~\text{and for all}~ r_1, r_2\in N$,   $$(r_1 + r_2)m = r_1m + r_2m~~\text{and}~~  (r_1r_2)m = r_1(r_2m).$$
}
\end{definition}

\noindent Observe that, a right near-ring $N$ has a natural structure of a left $N$-module.

\begin{definition}~\label{xxx}
Let $N$ be a right near-ring and $M$ be a left $N$-module. A subset $L$ of $M$ is called an
 {\it $N$-ideal} of $M$ if $(L,+)$ is a normal subgroup of $(M,+)$ and  $$r(l+m)-rm\in L$$ for all $m\in M$, $l\in L$ and $r\in N$.  The $N$-ideals of ${}_{N}N$ are called {\it left ideals}.  In fact, if $L$ is a left ideal of $N$ and $LN\subseteq L$, then $L$ is an {\it ideal} of $N$.\\
\end{definition}

\noindent For any near-ring $N$ and $a\in N$, the set $(0:_la)$ is an  $N$-ideal.
If $M$ is a left $N$-module, one can define a factor left $N$-module $M/L$ provided $L$ is an $N$-ideal of $M$. Near-ring homomorphisms and $N$-homomorphisms
are defined in the usual manner as for rings.\\

\noindent Let $a\in N$. We call $a$ {\it regular} if  there exists  $x\in N$ such that $a = axa$.  If $x$ can be chosen to be a unit, then $a$ is called {\it unit-regular}.   The element $a$ is said to be {\it left (right) strongly regular} if  there exists $x \in N$ such that $a = xa^2~(a = a^2x)$ and  {\it strongly regular} if it is left and right strongly regular.  Regular, unit-regular and left (right) strongly regular near-rings are defined in the usual manner.  We say that $N$ is {\it reduced} if it has no non-zero nilpotent elements, i.e., $a \in N, \ a^n=0$  implies $a=0$.  $N$ is said to have IFP ({\it Insertion-of-Factors-Property}) if for $a, b \in N,~ ab = 0$ implies $anb = 0$ for every $n \in N$.  For a detailed account of basic concepts concerning  near-rings and near-ring modules, we refer the reader to the books
\cite{ferrero2012nearrings}, \cite{meldrum1985near} and \cite{Pilz:Nearrings}.\\

\noindent\textbf{Convention}:     A near-ring $N$ is called {\it unital} if there exists an element $1\neq 0$ in $N$ such that, $1r=r1=r$ for every element $r$ in $N$.  We call  $N$ \emph{zero-symmetric} if $r0=0r=0$ for every $r$ in $N$.
Unless otherwise stated, all near-rings are  unital  zero-symmetric right near-rings and all $N$-modules are unitary.
We denote by $U(N)$ and $U(R)$  the collection of all units of $N$ and $R$ respectively (i.e., invertible elements with respect to multiplication `$\star$').

\section{Preliminaries on right near-rings}

A   {\it right  near-ring} $(N,+,\star)$ is a non-empty set $N$ with two binary operations `$+$' and `$\star$' such that $(N,+)$ is a group, (not necessarily Abelian) with identity $0,~ (N,\star)$ is a semigroup and `$\star$' satisfies the right distributive
law with respect to `$+$', i.e., $(x + y)\star z = x\star z + y\star z$ for all $x, y, z \in N$.   One often writes $xy$  for $x\star y$.
A natural example of a right near-ring is the following:  Let $G$ be a group, written additively but not necessarily Abelian, and  $M(G)$  be the set of all mappings from $G$ to $G$.   Given $f,g$ in $M(G)$, the mapping $f+g$ from $G$ to $G$ is defined as $(f+g)(x)=f(x)+g(x)$.    The product $f\star g$, of two elements, is their composition $f(g(x))$,  for all $x$ in $G$.  Then $(M(G),+,\star)$ is a right near-ring.
A {\it near-field} is a near-ring in which there is a multiplicative identity and every non-zero element has a multiplicative inverse.  As an immediate consequence of the right distributive law, we have $0x=0$ for every $x$ in $N$ but it is not necessarily true that $x0=0$ for every $x$ in $N$.  However, if $x0=0$ for every $x\in N$, then $N$ is zero-symmetric.\\

\noindent Let $a\in N$.  For a right near-ring $N$,   the mapping $r \mapsto ra$ from $N$ to $N$
determines an endomorphism of $(N, +)$ whose kernel coincides with  $(0:_la)$ and its image with $Na$.

\begin{definition}Let $N$ be a right near-ring.
{A left $N$-\emph{module} $M$ is a group, $(M, +)$,  written additively but not necessarily Abelian together with a map $N \times M \rightarrow M,(r, m) \mapsto rm,$
such that for all $m\in M~\text{and for all}~ r_1, r_2\in N$,   $$(r_1 + r_2)m = r_1m + r_2m~~\text{and}~~  (r_1r_2)m = r_1(r_2m).$$
}
\end{definition}

\noindent Observe that, a right near-ring $N$ has a natural structure of a left $N$-module.

\begin{definition}~\label{xxx}
Let $N$ be a right near-ring and $M$ be a left $N$-module. A subset $L$ of $M$ is called an
 {\it $N$-ideal} of $M$ if $(L,+)$ is a normal subgroup of $(M,+)$ and  $$r(l+m)-rm\in L$$ for all $m\in M$, $l\in L$ and $r\in N$.  The $N$-ideals of ${}_{N}N$ are called {\it left ideals}.  In fact, if $L$ is a left ideal of $N$ and $LN\subseteq L$, then $L$ is an {\it ideal} of $N$.\\
\end{definition}

\noindent For any near-ring $N$ and $a\in N$, the set $(0:_la)$ is an  $N$-ideal.
If $M$ is a left $N$-module, one can define a factor left $N$-module $M/L$ provided $L$ is an $N$-ideal of $M$. Near-ring homomorphisms and $N$-homomorphisms
are defined in the usual manner as for rings.\\

\noindent Let $a\in N$. We call $a$ {\it regular} if  there exists  $x\in N$ such that $a = axa$.  If $x$ can be chosen to be a unit, then $a$ is called {\it unit-regular}.   The element $a$ is said to be {\it left (right) strongly regular} if  there exists $x \in N$ such that $a = xa^2~(a = a^2x)$ and  {\it strongly regular} if it is left and right strongly regular.  Regular, unit-regular and left (right) strongly regular near-rings are defined in the usual manner.  We say that $N$ is {\it reduced} if it has no non-zero nilpotent elements, i.e., $a \in N, \ a^n=0$  implies $a=0$.  $N$ is said to have IFP ({\it Insertion-of-Factors-Property}) if for $a, b \in N,~ ab = 0$ implies $anb = 0$ for every $n \in N$.  For a detailed account of basic concepts concerning  near-rings and near-ring modules, we refer the reader to the books
\cite{ferrero2012nearrings}, \cite{meldrum1985near} and \cite{Pilz:Nearrings}.\\

\noindent\textbf{Convention}:     A near-ring $N$ is called {\it unital} if there exists an element $1\neq 0$ in $N$ such that, $1r=r1=r$ for every element $r$ in $N$.  We call  $N$ \emph{zero-symmetric} if $r0=0r=0$ for every $r$ in $N$.
Unless otherwise stated, all near-rings are  unital  zero-symmetric right near-rings and all $N$-modules are unitary.
We denote by $U(N)$ and $U(R)$  the collection of all units of $N$ and $R$ respectively (i.e., invertible elements with respect to multiplication `$\star$').

\section{General results and examples of left morphic near-rings}

\noindent For a near-ring $N$ and $a\in N$, consider the multiplication map  $n\mapsto na: N\rightarrow N.$
By the First Isomorphism Theorem, $ N/(0:_la)\cong~Na$  considered as $N$-modules.  If $Na$ is an $N$-ideal, then   the dual to this  isomorphism  is
$N/Na\cong(0:_la).$

\begin{definition}~\label{204}
Let $N$ be a near-ring and $a\in N$.  An element $a$ is called \emph{left morphic} if $Na$ is an $N$-ideal and
\[  N/Na\cong(0:_la)\] as  near-ring $N$-modules.  A near-ring $N$ is called \emph{left morphic} if each of its elements is left morphic.
\end{definition}

\noindent In Lemma~\ref{1}, below, we give other equivalent statements for a left morphic near-ring element.

\begin{lemma}\label{1}
Let $N$ be a near-ring and $a\in N$.  The following  are equivalent:
\begin{enumerate}
\item[\emph{(1)}] $a$ is left morphic.
\item[\emph{(2)}] There exists $b \in N$ such that $Na = (0:_lb)$ and $Nb=(0:_la)$.
\item[\emph{(3)}] There exists $b\in N$ such that $Na = (0:_lb)$ and $ Nb\cong(0:_la)$.
\end{enumerate}
\end{lemma}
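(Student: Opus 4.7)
The plan is to use the near-ring version of the First Isomorphism Theorem for the left multiplication map $\lambda_c \colon N \to N$, $n \mapsto nc$, which is an $N$-module homomorphism with kernel $(0:_l c)$ and image $Nc$, so that $N/(0:_l c) \cong Nc$ whenever $(0:_l c)$ is taken as the $N$-ideal that the paper has already identified.

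For the implication $(1) \Rightarrow (2)$, I would start from an $N$-module isomorphism $\sigma \colon N/Na \to (0:_l a)$ and define $b := \sigma(1 + Na)$. The key observation is that, because $N$ is unital and the quotient action is $n \cdot (1 + Na) = n + Na$, the $N$-linearity of $\sigma$ forces $\sigma(n + Na) = n \cdot \sigma(1 + Na) = nb$ for every $n \in N$. Surjectivity of $\sigma$ then reads as $Nb = (0:_l a)$, while injectivity says $\sigma(n + Na) = 0$ happens only for $n \in Na$, which translates exactly to $(0:_l b) = Na$. The implication $(2) \Rightarrow (3)$ is immediate, since equality of modules is trivially an isomorphism.

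For $(3) \Rightarrow (1)$, I would first use that annihilators $(0:_l b)$ are $N$-ideals (a fact recorded in the preliminaries), so the hypothesis $Na = (0:_l b)$ guarantees that $Na$ is an $N$-ideal and the quotient $N/Na$ is meaningful. Applying the First Isomorphism Theorem to $\lambda_b$ gives $N/Na = N/(0:_l b) \cong Nb$, and composing with the assumed isomorphism $Nb \cong (0:_l a)$ yields $N/Na \cong (0:_l a)$, which is the definition of $a$ being left morphic.

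The main obstacle I anticipate is the bookkeeping in $(1) \Rightarrow (2)$: in a right near-ring, left distributivity of multiplication over addition fails, so one has to be careful that the action on $N/Na$ is $n \cdot (m + Na) = nm + Na$ (which is well-defined precisely because $Na$ is an $N$-ideal in the sense of Definition~\ref{xxx}) and that this is the action under which $\sigma$ is $N$-linear; once these module-theoretic mechanics are pinned down, the element $b = \sigma(1 + Na)$ carries all the information and the two annihilator equalities fall out together.
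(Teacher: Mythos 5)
Your proposal is correct and follows essentially the same route as the paper: define $b=\sigma(1+Na)$, read off $Nb=(0:_la)$ from surjectivity and $(0:_lb)=Na$ from injectivity, and for $(3)\Rightarrow(1)$ use that $(0:_lb)$ is an $N$-ideal together with the First Isomorphism Theorem for $n\mapsto nb$. Your extra care about the quotient action $n\cdot(m+Na)=nm+Na$ being well-defined via the $N$-ideal condition is exactly the point the paper leaves implicit.
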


\begin{proof}
$(1)\Rightarrow(2)$. By (1),  if $\sigma:N/Na \rightarrow (0:_la)$ defined by  $\sigma(1+Na)=b$ is the $N$-isomorphism, then  $Nb=\text{Im}(\sigma)=(0:_la)$ because $\sigma$ is surjective and $(0:_lb)=\ker(\sigma)=\{n+Na:nb=0\}=Na$ because $\sigma$ is injective.

$(2)\Rightarrow (3)$. This follows from the equality $Nb=(0:_la)$ of $N$-ideals.

$(3)\Rightarrow (1)$. Suppose that there exists $b \in N$ such that $Na=(0:_lb)~\text{and}~Nb \cong(0:_la)$, then $Na$ is an $N$-ideal and $N/Na =N/(0:_lb)\cong Nb \cong (0:_la)$, proving $(1)$.
\end{proof}

\begin{lemma}\label{10}
 Let $N$ be a   near-ring, $a\in N$ and $u\in U(N)$, then
 \begin{enumerate}
  \item [\emph{(1)}] $Nu=N$,
  \item [\emph{(2)}] $(0:_la)=(0:_lau^{-1})$,
  \item [\emph{(3)}]$(0:_la)u^{-1}=(0:_lua)$,
  \item [\emph{(4)}]$(0:_la)\cong(0:_la)u$.
 \end{enumerate}
\end{lemma}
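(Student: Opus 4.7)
The plan is to dispatch all four parts by direct application of the tools available in a right near-ring: associativity of multiplication, right distributivity, and the defining property $uu^{-1} = u^{-1}u = 1$ of the unit $u$. Right distributivity immediately forces $0\cdot x = 0$ for every $x$, which will be used to absorb zeros after multiplication. The only genuine care required throughout is to avoid ever invoking left distributivity, which need not hold.

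Part (1) is a single line: for any $n \in N$, inserting $1 = u^{-1}u$ and associating gives $n = (nu^{-1})u \in Nu$, and the reverse inclusion $Nu \subseteq N$ is automatic.

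Parts (2) and (3) are each proved by double inclusion, and the proofs are essentially symmetric in shape. For (2), one direction follows from $n(au^{-1}) = (na)u^{-1}$ and the other from $na = (n(au^{-1}))u$, with the zero on the right-hand side getting absorbed in each case. For (3), the containment $(0:_l a)u^{-1} \subseteq (0:_l ua)$ comes from the computation $(nu^{-1})(ua) = n(u^{-1}u)a = na$, while the reverse inclusion hinges on the observation that any $m \in (0:_l ua)$ can be recovered as $(mu)u^{-1}$ with $mu$ serving as a witness in $(0:_l a)$.

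For (4), the natural candidate $N$-isomorphism is right multiplication by $u$, namely $\phi\colon (0:_l a) \to (0:_l a)u$, $n \mapsto nu$, with inverse given by right multiplication by $u^{-1}$. Surjectivity is by construction of the codomain; additivity is right distributivity $(n_1+n_2)u = n_1u + n_2u$; $N$-linearity is the associativity identity $(xn)u = x(nu)$; and injectivity is immediate because $nu = 0$ forces $n = (nu)u^{-1} = 0$. One small preliminary to note before writing the isomorphism is that $(0:_l a)u$ is indeed a left $N$-submodule: closure under left $N$-action uses $x(nu) = (xn)u$ together with the fact that $(0:_l a)$ is already an $N$-ideal, and closure under addition is again right distributivity. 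I anticipate no substantive obstacle—the only discipline required is to keep the unit factor $u^{\pm 1}$ consistently on the correct (right) side throughout every calculation.
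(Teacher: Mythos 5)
Your proof is correct and follows essentially the same route as the paper's: inserting $1=u^{-1}u$ for (1), double inclusion via associativity for (2) and (3), and right multiplication by $u$ as the $N$-isomorphism for (4). You merely spell out the details that the paper leaves implicit (the paper dismisses (3) as ``similar to (2)'' and states the map in (4) without verification).
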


\begin{proof}
To prove (1), it is enough to show that $N\subseteq Nu$ since for any $u\in U(N)$,  $1=u^{-1}u \in Nu$.   For every $r\in N, r = r\cdot1=r(u^{-1}u)=(ru^{-1})u \in Nu$, hence $N = Nu$.\\
For (2),  let $x\in (0:_la)$.   Then $xa=0$.  Then  $0=0u^{-1}=(xa)u^{-1}=x(au^{-1})$; hence $x\in (0:_lau^{-1})$ and $(0:_la) \subseteq (0:_lau^{-1})$.   For $x\in (0:_lau^{-1})$, we get  $x(au^{-1})=(xa)u^{-1}=0$.   This implies that $(xa)u^{-1}u=xa=0$.  Therefore, $x\in(0:_la)$; which proves $(0:_lau^{-1})= (0:_la)$.   The proof of (3) is similar to that of (2).\\
For the proof of (4), the  map $\vartheta:(0:_la)\rightarrow (0:_la)u,~x\mapsto xu$  gives the required $N$-isomorphism.
\end{proof}

\begin{proposition}\label{2}
Let $N$ be a near-ring and $a\in N$.  If $a$ is  left morphic, then the same is true of $au$ and $ua$, for every $u \in U(N)$.
\end{proposition}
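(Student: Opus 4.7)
The plan is to invoke the characterization in Lemma~\ref{1}(2): $a$ is left morphic if and only if there exists $b\in N$ with $Na=(0:_lb)$ and $Nb=(0:_la)$. So fix such a $b$ once and for all, and then produce, for each of $au$ and $ua$, an explicit witness built from $b$ and $u^{\pm1}$ that satisfies the same two equalities.

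For the element $au$, I would guess that $b'=u^{-1}b$ is the correct partner. To verify $N(au)=(0:_l b')$, first use associativity of multiplication to write $N(au)=(Na)u=(0:_lb)u$, and then apply Lemma~\ref{10}(3) (with $a$ replaced by $b$ and $u$ by $u^{-1}$) to obtain $(0:_lb)u=(0:_l u^{-1}b)=(0:_lb')$. For the reverse equation $Nb'=(0:_l au)$, observe that $N(u^{-1}b)=(Nu^{-1})b=Nb$ by Lemma~\ref{10}(1), which equals $(0:_la)$ by hypothesis, and finally $(0:_la)=(0:_l au)$ by Lemma~\ref{10}(2) applied with $u$ replaced by $u^{-1}$.

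The argument for $ua$ is symmetric, with the witness $b''=bu^{-1}$. The identity $N(ua)=Na$ comes from $Nu=N$ (Lemma~\ref{10}(1)); this equals $(0:_lb)$ by hypothesis, and Lemma~\ref{10}(2) gives $(0:_lb)=(0:_l bu^{-1})=(0:_lb'')$. Dually, $Nb''=(Nb)u^{-1}=(0:_la)u^{-1}$, which by Lemma~\ref{10}(3) equals $(0:_lua)$. Either conclusion then lets us invoke Lemma~\ref{1}(2) to conclude that $au$ (respectively $ua$) is left morphic.

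The calculations themselves are short; the only real care needed is bookkeeping — matching each step to the correct clause of Lemma~\ref{10} and remembering that in a right near-ring $Na$ behaves well under \emph{right} multiplication by a scalar (because $(na)u=n(au)$ from associativity of the semigroup $(N,\star)$) but that one cannot distribute on the left. Since every manipulation here is either associativity or an invocation of Lemma~\ref{10}, no such illegal distributivity is ever needed, so there is no serious obstacle.
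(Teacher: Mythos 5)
Your proposal is correct and is essentially identical to the paper's own proof: both fix $b$ via Lemma~\ref{1}(2) and exhibit the witnesses $u^{-1}b$ for $au$ and $bu^{-1}$ for $ua$, verifying the two annihilator equalities through the same applications of Lemma~\ref{10}. No differences worth noting.
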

\begin{proof} Let $a \in N$ be a left morphic element.  Then, by Lemma~\ref{1}, $Na = (0:_lb)~\text{and}~Nb=(0:_la)$ for some $b \in N$.  Suppose $u \in N$  is a unit.  Applying Lemmas~\ref{1} and \ref{10}, $N(ua)=Na=(0:_lb)=(0:_lbu^{-1})$ and  $N(bu^{-1})=(Nb)u^{-1}=(0:_la)u^{-1}=(0:_lua)$.  So $ua$ is left morphic.
Similarly, $N(au)=(Na)u=(0:_lb)u=(0:_lu^{-1}b)$,  and $N(u^{-1}b)=(Nu^{-1})b=Nb=(0:_la)=(0:_lau)$.  So, $au$ is also left morphic.
\end{proof}

\begin{proposition}\label{64}
Let $N$ be a near-ring and $a\in N$.  If $a\in N$ is  left morphic, then the following are equivalent:
\begin{enumerate}
\item[\emph{(1)}]$(0:_la)= \{0\}$,
\item[\emph{(2)}]$Na = N$,
\item[\emph{(3)}]$a \in U(N)$.
\end{enumerate}
\end{proposition}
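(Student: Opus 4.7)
The plan is to establish the cycle $(1) \Rightarrow (2) \Rightarrow (3) \Rightarrow (1)$. Two of these implications should be nearly formal. For $(1) \Rightarrow (2)$, the morphic isomorphism $N/Na \cong (0:_l a)$ from Definition~\ref{204} collapses to $Na = N$ the moment the annihilator is trivial. For $(3) \Rightarrow (1)$, if $a \in U(N)$ with inverse $u$, then $xa = 0$ yields $x = (xa)u = 0$ by associativity of $\star$, so $(0:_l a) = \{0\}$.

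The substantive step is $(2) \Rightarrow (3)$. From $Na = N$ I pick $b \in N$ with $ba = 1$, providing a left inverse of $a$; the goal is to upgrade this to a two-sided inverse. Using associativity of $\star$, I get $(ab)a = a(ba) = a$, so $(ab)a - a = 0$. Right distributivity, together with the identity $(-1)a = -a$ (which itself follows from applying right distributivity to $1 + (-1) = 0$), converts this into $(ab - 1)a = 0$, so $ab - 1 \in (0:_l a)$. It remains to verify that $(0:_l a) = \{0\}$ under hypothesis $(2)$. For this I invoke Lemma~\ref{1}: there exists $c \in N$ with $Na = (0:_l c)$ and $Nc = (0:_l a)$. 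Since $Na = N = (0:_l c)$, I have $c = 1 \cdot c = 0$, and so $(0:_l a) = Nc = \{0\}$ by zero-symmetry. Hence $ab - 1 = 0$, giving $ab = 1$, so $a \in U(N)$.

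The main obstacle is that, without left distributivity, having a left inverse in a near-ring does not automatically give a two-sided inverse; one must carefully expand $(ab - 1)a$ using only right distributivity and the fact that multiplication by $-1$ on the left agrees with additive negation. The morphic hypothesis, processed through Lemma~\ref{1}, is precisely what forces $(0:_l a)$ to vanish once $Na = N$, and this is what closes the cycle.
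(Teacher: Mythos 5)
Your proposal is correct and follows essentially the same route as the paper: the equivalence of (1) and (2) via the morphic isomorphism $N/Na\cong(0:_la)$, the immediate verification of $(3)\Rightarrow(1)$, and for $(2)\Rightarrow(3)$ producing $b$ with $ba=1$, showing $ab-1\in(0:_la)$ by right distributivity, and killing it because the annihilator vanishes. Routing the vanishing of $(0:_la)$ through Lemma~\ref{1} rather than directly through the isomorphism is only a cosmetic difference.
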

\begin{proof}
$(1)\Leftrightarrow(2)$ is clear from $N/Na$ isomorphic to $(0:_la)$. The proof of $(3)\Rightarrow(1)$ is immediate. We now prove $(2)\Rightarrow(3)$;  from (2) we have that $1 = ba$  for some $b$ in $N$ and so $a=aba$.  Then $(1-ab)$ is in the left annihilator of $a$ that is zero.   Thus we obtain  $1=ab=ba$; which implies that $a\in U(N)$.
\end{proof}

\noindent Using condition (2) in Lemma~\ref{1} we get:
A finite direct product $\prod^{n}\limits_{i=1} N_{i}$ of near-rings is left morphic if and only if each  near-ring $N_i$ is left morphic.
The proof is immediate after taking into account that the left annihilator of an element $(a_1,a_2,\ldots,a_n)$ in the product is just the product of the sets $(0:_la_i),i=1,2,\ldots,n$.\\

\noindent We will give examples of left morphic near-ring elements and left morphic near-rings starting with the ring theoretic ones.

\begin{Ex}~\label{der}
The earliest known class of left morphic rings to be determined were  the unit-regular rings in \cite{ehrlich1976units}.  Later, in \cite{nicholson2004rings}, \cite{nicholson2004principal} and \cite{nicholson2005morphic},  more examples were investigated to include  all local rings where the  Jacobson radical is a nilpotent principle ideal,  all factor rings of commutative principal ideal domains like the rings $\mathbb{Z}_n$ of integers modulo $n$.  Let $R$ be a unit-regular ring, $\sigma:R\to R$ be any ring homomorphism that fixes idempotents element-wise and $R[x;\sigma]$ be the skew polynomials with commutation relation $xr=\sigma(r)x$.  The rings  $R[x;\sigma]/(x^{n})$ and $R[x]/(x^{n})$ are left morphic  for all $n\geq 1$ by \cite{lee2007morphic} and \cite{lee2010theorem}.
\end{Ex}

\begin{Ex}\label{hth}
Let $N$ be a near-ring.  The units of $N$ are left morphic.
To see this, let $u\in N$ be a unit.   Then $Nu=N$ by Lemma~\ref{1} and $N/Nu =N/N \cong \{0\}=(0:_lu)$.  This proves that $u$ is left morphic.  Similarly, $N/uN\cong (0:_ru)$ and thus $u$ is morphic.   Consequently,  all  near-fields are morphic.
If all idempotents $e$ of  $N$ are  central,  then     $1-e$  is  idempotent and   $N(1-e) \subseteq(0:_le)$.   Let $xe=0$.  Then $x-(1-e)x=x+ex-x=0$.  We, therefore, have $x=(1-e)x=x(1-e)\in N(1-e)$.   Thus $N(1-e) =(0:_le)$.  Similarly,  $(0:_l(1-e))=Ne$  and  Lemma~\ref{1} implies  $e$ is  left morphic.
\end{Ex}

\begin{Ex}~\label{ccc}
 A near-ring $N$ is  said to be  {\it subcommutative} if $Na=aN$ for all $a\in N$.   We call $N$   a {\it generalised near field} if  it is  regular  and  subcommutative.
It was proved by {\rm\cite[Theorem 1.5]{nandakumar2009weakly}} that, if $N$ is a unital generalised near field, then  for all $a\in N,Na$  is an $N$-ideal
of $N$ and $N$ has a decomposition $N= (0:_la)\bigoplus Na$.    Since it is a direct sum decomposition and both $(0:_la)$ and $Na$ are ideals of $N$,  $N/Na\cong (0:_la)$ for all $a\in N$.   Thus $N$ is left-morphic,   and hence all generalised near fields are left-morphic near-rings.
\end{Ex}

\begin{Ex}~\label{wsw} A near-ring $(N,+,\star)$ is called {\it weakly divisible} if and only if for all $a,b\in N$, there exists $x\in N$ such that $xa=b$ or $xb = a$.
Let $N$ be a finite  weakly divisible near-ring and $L = N \setminus U(N)$.  In {\rm\cite{benini1999weakly, MR2597325}}, it was shown that there exists $n,k\in \{0,1,2\ldots\},0\leq k< n$ and  $r\in L$ such that $L^k=Nr^k$, $L^n = 0$ and $r^n=0$.  Further, for each $a\in L,~a=ur^k$ for some $u\in U(N)$.     Moreover, every $L^k$ is an $N$-ideal of $N$ and  $(0:_lr^k) = Nr^{n-k}$.
\begin{claim}
The  near-ring in Example~\ref{wsw} is  left-morphic.
\end{claim}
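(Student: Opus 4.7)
The strategy is to split the elements of $N$ into three groups: the zero element, the units, and the non-zero non-units. For $a=0$, left morphicity is immediate since zero-symmetry gives $N\cdot 0=\{0\}$ (an $N$-ideal) and $(0:_l 0)=N$, so $N/N\cdot 0\cong N=(0:_l 0)$. For $a\in U(N)$, Example~\ref{hth} already shows that units are left morphic. The heart of the argument is therefore the case $a\in L\setminus\{0\}$.

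For such an $a$, the structure theorem cited from \cite{benini1999weakly, MR2597325} lets us write $a=ur^{k}$ for some $u\in U(N)$ and some exponent $k$ with $1\le k\le n-1$. By Proposition~\ref{2}, left morphicity is preserved under left and right multiplication by units, so it suffices to establish that $r^{k}$ is left morphic for each admissible $k$.

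To verify this I would apply the equivalence $(1)\Leftrightarrow(2)$ of Lemma~\ref{1} with the candidate partner $b=r^{n-k}$. The first required identity, $Nb=(0:_l r^{k})$, is exactly the given formula $(0:_l r^{k})=Nr^{n-k}$. The second, $Nr^{k}=(0:_l b)$, follows by reusing the same formula with $k$ replaced by $n-k$ (which again lies in $\{1,\ldots,n-1\}$) to obtain $(0:_l r^{n-k})=Nr^{n-(n-k)}=Nr^{k}$. Since $Nr^{k}=L^{k}$ is an $N$-ideal by hypothesis, Lemma~\ref{1} then gives that $r^{k}$ is left morphic, and combining with Proposition~\ref{2} yields that $a=ur^{k}$ is left morphic as well.

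The main obstacle is essentially bookkeeping: one must ensure that the substitution $k\mapsto n-k$ stays inside the range on which the cited identity $(0:_l r^{k})=Nr^{n-k}$ is known to hold, and that the decomposition $a=ur^{k}$ is available for every non-zero $a\in L$. Once these ranges are checked, no computation inside $N$ beyond Lemmas~\ref{1} and \ref{10} and Proposition~\ref{2} is needed, because the annihilator and principal-ideal formulae imported from the theory of finite weakly divisible near-rings supply the two hypotheses of Lemma~\ref{1}(2) in a perfectly symmetric way via the involution $k\leftrightarrow n-k$.
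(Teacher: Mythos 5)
Your proof is correct and follows essentially the same route as the paper: both take the partner element $b=r^{n-k}$ and verify the annihilator conditions of Lemma~\ref{1}. The only cosmetic differences are that you reduce to $r^{k}$ via Proposition~\ref{2} and apply the cited formula $(0:_l r^{k})=Nr^{n-k}$ symmetrically under $k\leftrightarrow n-k$, whereas the paper keeps $a=ur^{k}$ throughout (using Lemma~\ref{10}) and re-derives the identity $Nr^{k}=(0:_l r^{n-k})$ directly from the structure of $L$.
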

\begin{proof}
 To see this, suppose that $a\in N$.  If $a\in U(N)$, then by Example~\ref{hth} (a) there is nothing to prove.  Otherwise $a=ur^k$ for some $k>0$.  Applying  Lemma~\ref{10},  $Na=Nur^k=Nr^k\subseteq (0:_lr^{n-k})$.   To prove $Nr^k=(0:_lr^{n-k})$, let  $x\in (0:_lr^{n-k})$.   Then  $x\in L=Nr^k$ or $x$ is a unit.   If $x$ is a unit, then $r^{n-k}=0$ and thus $N=(0:_lr^{n-k})$.   But this would imply that $n-k\geq n$  giving $k=0$, a contradiction.  Thus $x\in L=Nr^k$ and $Nr^k=(0:_lr^{n-k})$.   Since $Na=(0:_lr^{n-k})$ and (in view of   Lemma~\ref{10}) $Nr^{n-k}=(0:_lr^k)=(0:_lur^k)u=(0:_la)u\cong (0:_la),$
  $a$ is left-morphic by Lemma~\ref{1}, and thus $N$ is a left-morphic near-ring.
\end{proof}
\end{Ex}

\noindent With Example~\ref{der} in mind and following \cite[Proposition 3]{benini1999weakly}, we hasten to note that the only morphic weakly divisible near-rings of the form $N:=(\mathbb{Z}_n,+,\star)$  are those for which  $n$ is a power of a prime number.

\section{Morphic regular elements}

\noindent The aim of this section is to investigate morphic elements in regular near-rings.  The following Lemmas [3 - 6] will be useful in the sequel.

\begin{lemma}{\rm \cite[Theorem 9.158]{Pilz:Nearrings}}~\label{213}
Let $N$ be a non-zero regular near-ring.  The  following  are equivalent:
\begin{enumerate}
\item [\emph{(1)}]$N$ is reduced,
\item [\emph{(2)}]all idempotents of $N$ are central,
\item [\emph{(3)}]$N$ is a sub-direct product of near-fields.
\end{enumerate}
\end{lemma}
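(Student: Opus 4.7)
The plan is to establish $(1) \Leftrightarrow (2)$ and $(1) \Leftrightarrow (3)$, with regularity used only for $(2) \Rightarrow (1)$ and $(1) \Rightarrow (3)$. The easy direction $(3) \Rightarrow (1)$ follows because near-fields are reduced (a nonzero nilpotent $a$ with minimal index $n$ would satisfy $0 = a^{-1} a^n = a^{n-1}$, a contradiction) and reducedness is inherited componentwise by subdirect products. For $(2) \Rightarrow (1)$, it suffices to rule out $a^2 = 0$ (higher indices follow by the standard minimality induction $(a^{n-1})^2 = a^{2n-2} \supseteq a^n \cdot a^{n-2} = 0$ for $n \geq 2$): regularity supplies $x$ with $a = axa$, the element $xa$ is idempotent and hence central by $(2)$, so $a = axa = (xa)a = xa^2 = 0$.

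The main content is $(1) \Rightarrow (2)$. First, zero-symmetry and reducedness give two bonus identities, free of regularity: $ab = 0 \Rightarrow ba = 0$ (from $(ba)^2 = b(ab)a = 0$), and the IFP $ab = 0 \Rightarrow anb = 0$ for every $n$ (from $(anb)^2 = an(ba)nb = 0$). Now fix an idempotent $f$ and $y \in N$ and set $a := fy - fyf$. Right distributivity and $f^2 = f$ give $af = fyf - fyf^2 = 0$, whence $fa = 0$ and $fna = 0$ for every $n$. Distributing the left factor of $a \cdot a$ on the right yields $a^2 = (fy)a - (fyf)a$, and both summands vanish by IFP applied to $fa = 0$. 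Reducedness then forces $a = 0$, i.e., $fy = fyf$. Specializing to $f = e$ gives $ey = eye$. The complement $1 - e$ is also idempotent (since $(1-e)e = e - e^2 = 0$ and hence $(1-e)^2 = (1-e) - (1-e)e = 1 - e$); applying the identity to $f = 1-e$ and expanding $((1-e)y)(1-e)$ on the right produces $(1-e)y = (1-e)y - (1-e)ye$, so $(1-e)ye = ye - eye = 0$. Combining, $ey = eye = ye$, so $e$ is central.

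For $(1) \Rightarrow (3)$, I would invoke the standard identification, in zero-symmetric near-rings, of the nil radical with the intersection of all minimal prime $N$-ideals; reducedness makes this intersection zero. Each quotient $N/P$ by such a prime is regular, reduced, and has no zero divisors. For $0 \neq \bar a \in N/P$ with $\bar a = \bar a \bar x \bar a$, right distributivity gives $(\bar a \bar x - 1)\bar a = 0$, so the absence of zero divisors forces $\bar a \bar x = 1$; the nonzero idempotent $\bar e := \bar x \bar a$ satisfies $(1 - \bar e)\bar e = \bar e - \bar e^2 = 0$, forcing $\bar e = 1$, i.e., $\bar x \bar a = 1$. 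Hence every $N/P$ is a near-field, and the diagonal map $N \hookrightarrow \prod_{P} N/P$ is the desired subdirect representation. The chief obstacle is $(1) \Rightarrow (2)$: without left distributivity one cannot expand $e(xe - ex)$ ring-theoretically, so the argument must isolate the right-distributivity-friendly element $fy - fyf$ (whose product with $f$ collapses cleanly), derive IFP from reducedness, and apply the trick to both $e$ and $1-e$ to squeeze out centrality. A secondary delicate point, in $(1) \Rightarrow (3)$, is justifying the identification of the nil radical with the intersection of the appropriate class of minimal prime $N$-ideals, for which I would appeal to Pilz's framework.
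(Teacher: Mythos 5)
First, note that the paper itself gives no proof of this lemma: it is quoted verbatim from Pilz's book (Theorem 9.158), so your proposal can only be judged on its own merits. Its overall architecture is the standard one and most of it is sound: $(3)\Rightarrow(1)$ and $(2)\Rightarrow(1)$ are correct, the derivation of IFP and of $fy=fyf$ for idempotents $f$ from reducedness is exactly the right-distributive-safe trick that is needed, and the reduction of $(1)\Rightarrow(3)$ to the subdirect decomposition of a reduced zero-symmetric near-ring into integral near-rings, each of which is then a near-field by your idempotent argument, is legitimate (though it outsources the hardest structural fact to Pilz).

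There are, however, two places in $(1)\Rightarrow(2)$ where you silently use \emph{left} distributivity, which is precisely what a right near-ring lacks. First, $(1-e)^2=(1-e)\cdot 1-(1-e)\cdot e$ expands the sum sitting in the \emph{right} factor; right distributivity only gives $(1-e)^2=(1-e)-e(1-e)$, so you must instead prove $e(1-e)=0$, which fortunately follows from your own identity: $e(1-e)=e(1-e)e=e\cdot 0=0$ using $(1-e)e=e-e^2=0$ and zero-symmetry. Second, "expanding $((1-e)y)(1-e)$ on the right" to get $(1-e)y=(1-e)y-(1-e)ye$ is the same illegal move. The repair is to right-multiply the identity $(1-e)y=(1-e)y(1-e)$ by $e$, obtaining $(1-e)ye=(1-e)y\cdot\bigl((1-e)e\bigr)=(1-e)y\cdot 0=0$, and only then expand $(1-e)(ye)=ye-e(ye)$ legitimately (the sum is now in the left factor) to conclude $ye=eye=ey$. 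Your final displayed equation $(1-e)ye=ye-eye=0$ is true; only the route you give to it is not. With these two local corrections the proof of $(1)\Rightarrow(2)$ goes through, and the rest stands.
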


\begin{lemma}\label{hdt}
A near-ring $N$ is left strongly regular if and only if $N$ is regular and reduced if and only if $N$ is regular with  central idempotents
\end{lemma}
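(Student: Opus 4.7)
Label the three conditions as (1) left strongly regular, (2) regular and reduced, and (3) regular with central idempotents. The equivalence (2) $\Leftrightarrow$ (3) is immediate from Lemma~\ref{213}, which identifies ``reduced'' and ``all idempotents central'' once regularity is in place. For (2) $\Rightarrow$ (1), I would take a regularity witness $a = axa$, set $e := xa$, and verify $e^2 = (xa)(xa) = x(axa) = xa = e$, so $e$ is idempotent; Lemma~\ref{213} then makes $e$ central, and hence $a = a(xa) = (xa)a = xa^2$, which is the left strong regularity of $a$.

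The substantive direction is (1) $\Rightarrow$ (2). Reducedness is the easy half: iterating $a = xa^2$ yields $a = x^{n-1}a^n$, so $a^n = 0$ forces $a = 0$. For regularity, my plan is first to record that any reduced zero-symmetric near-ring satisfies IFP, by observing that $ab = 0$ gives $(ba)^2 = b(ab)a = 0$, hence $ba = 0$, and then $(anb)^2 = an(ba)nb = 0$ implies $anb = 0$. I then set $e := xa$, so $ea = xa^2 = a$, and compute $(a-ae)a = a^2 - a(ea) = 0$ using right distributivity and associativity. IFP applied with multiplier $x$ yields $(a-ae)e = 0$; the two-sided form of reducedness converts both $(a-ae)a = 0$ and $(a-ae)e = 0$ into $a(a-ae) = 0$ and $e(a-ae) = 0$; and the latter, together with associativity and zero-symmetry, gives $ae(a-ae) = a\bigl(e(a-ae)\bigr) = 0$. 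Finally, right-distributive expansion on the left factor yields
\[
(a-ae)^2 \;=\; a(a-ae) - ae(a-ae) \;=\; 0,
\]
so reducedness forces $a = ae = a(xa) = (ax)a = axa$, proving regularity.

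The main obstacle I expect is the absence of left distributivity. In a ring the parallel computation $(a-axa)^2 = 0$ is a one-liner, but here each expansion has to be arranged so that only right distributivity (on the left factor) and associativity are used; every ``left-sided'' cancellation is routed through IFP and through the $ab = 0 \Leftrightarrow ba = 0$ consequence of reducedness. Thus the argument has to assemble the one-sided annihilations $(a-ae)a = 0$, $(a-ae)e = 0$, $a(a-ae) = 0$, $e(a-ae) = 0$, and $ae(a-ae) = 0$ before the single right-distributive expansion of $(a-ae)^2$ can be executed.
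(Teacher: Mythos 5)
Your proof is correct, but it takes a genuinely different route from the paper's. The paper disposes of the lemma in two lines by citation: the equivalence of left strong regularity with ``regular and reduced'' is quoted from \cite[Corollary 4.3]{Groenewald}, and Lemma~\ref{213} then identifies ``reduced'' with ``central idempotents'' in the presence of regularity. You prove the Groenewald equivalence from scratch: the direction (2)$\Rightarrow$(1) via the idempotent $e=xa$ made central by Lemma~\ref{213}, and the direction (1)$\Rightarrow$(2) by first extracting reducedness from $a=x^{n-1}a^n$ (which needs zero-symmetry for $x^{n-1}\cdot 0=0$ --- available under the paper's standing convention), then deriving IFP and the symmetry $ab=0\Leftrightarrow ba=0$ from reducedness, and finally assembling the one-sided annihilations needed to expand $(a-ae)^2=0$ using only right distributivity. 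Each step checks out: $(a-ae)a=a^2-a(ea)=0$, IFP inserts $x$ to give $(a-ae)e=0$, reducedness flips both to $a(a-ae)=0$ and $e(a-ae)=0$, and the single right-distributive expansion closes the argument. Your (1)$\Rightarrow$(2) computation in effect also reproves the content of Lemma~\ref{13} (Reddy--Murty), since it yields $a=axa$ directly from $a=xa^2$. What your approach buys is self-containedness and an explicit illustration of how the missing left distributivity is circumvented; what the paper's approach buys is brevity, at the cost of resting entirely on two external results.
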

\begin{proof}
By \cite[Corollary 4.3]{Groenewald}, $N$ is left strongly regular if and only if $N$ is regular  and reduced.  Lemma~\ref{213} completes the proof.
\end{proof}

\begin{lemma}{\rm \cite[Theorem 3]{Reddy}}\label{13}
Let $N$ be a left strongly regular near-ring and $a\in N$.   If $a = xa^2$ for some $x \in N$, then $a=axa$ and $ax = xa$.
 \end{lemma}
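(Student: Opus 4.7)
The plan is to show that both $xa$ and $ax$ are central idempotents, from which $axa = a$ and $ax = xa$ follow by short associativity computations. Lemma~\ref{hdt} gives $N$ regular and reduced with central idempotents (Lemma~\ref{213}). A preliminary observation I will use is that every reduced zero-symmetric near-ring has the IFP: if $bc = 0$, then by associativity and zero-symmetry $(cb)^2 = c(bc)b = 0$, so reducedness forces $cb = 0$; then $(bnc)^2 = bn(cb)nc = 0$ for any $n \in N$, so $bnc = 0$.

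From $a = xa^2$ and right distributivity I get $(1 - xa)a = a - xa^2 = 0$. Applying IFP with inserted factor $x$ gives $(1 - xa)(xa) = 0$, that is, $(xa)^2 = xa$, so $xa$ is idempotent. By Lemma~\ref{213} it is central, hence $axa = a(xa) = (xa)a = xa^2 = a$, proving the first claim.

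For the commutation $ax = xa$, associativity together with $axa = a$ gives $(ax)^2 = (axa)x = ax$, so $ax$ is also a central idempotent. Rewriting $axa = a$ as $(1 - ax)a = 0$ and applying IFP with inserted factor $x$ yields $(1 - ax)(xa) = 0$, i.e.\ $(ax)(xa) = xa$. On the other hand, associativity with $xa^2 = a$ gives $(xa)(ax) = (xa^2)x = ax$, and centrality of $ax$ turns this into $(ax)(xa) = (xa)(ax) = ax$. Comparing the two expressions for $(ax)(xa)$ forces $ax = xa$.

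The main obstacle is the failure of left distributivity in a right near-ring, which blocks the usual ring-theoretic manipulations. The critical workaround is the IFP, derived from reducedness, which upgrades an annihilation identity $(1 - e)a = 0$ to the stronger $(1 - e)(xa) = 0$ by inserting the extra factor $x$. Once $xa$ and $ax$ are recognised as central idempotents, the rest is pure associativity; no appeal to a subdirect-product decomposition into near-fields is needed.
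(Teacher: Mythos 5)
Your proof is correct. Note that the paper does not actually prove this lemma at all: it is imported verbatim as \cite[Theorem 3]{Reddy}, so you have supplied a self-contained argument where the authors rely on the literature. Your route is legitimate and uses only ingredients the paper already has on hand: Lemma~\ref{hdt} gives reducedness and centrality of idempotents, and your derivation of the IFP from reducedness plus zero-symmetry (via $(cb)^2=c(bc)b=0$ and $(bnc)^2=bn(cb)nc=0$) is the standard argument, which the paper itself invokes without proof just before Proposition~\ref{226}. The two IFP insertions are applied correctly: $(1-xa)a=0$ upgraded to $(1-xa)(xa)=0$ gives $(xa)^2=xa$, and right distributivity together with $(-u)v=-(uv)$ justifies each expansion, since the problematic left distributivity is never used. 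Centrality of the idempotent $xa$ then yields $axa=(xa)a=xa^2=a$, and the final comparison $(ax)(xa)=xa$ versus $(xa)(ax)=(xa^2)x=ax$ combined with centrality of $ax$ forces $ax=xa$. The one thing worth flagging is that your argument quietly uses the standing convention that $N$ is unital and zero-symmetric (you need $1$ to form $1-xa$, and zero-symmetry to kill $c(bc)b$); since the paper adopts exactly this convention, this is not a gap, but Reddy and Murty's original theorem is stated in a slightly more general setting.
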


\begin{lemma}~{\rm\cite[Proposition 5]{gord}}~\label{ffff}
Every left strongly regular near-ring is unit-regular.
\end{lemma}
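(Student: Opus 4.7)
My plan is to reduce to the subdirect-product-of-near-fields picture and there cook up an explicit unit that serves as an inner inverse of a given element. Let $a \in N$ and pick $x$ with $a = xa^{2}$. Lemma~\ref{13} already upgrades this to $a = axa$ together with the commutation $ax = xa$, so the element $e := ax = xa$ is an idempotent, and Lemma~\ref{hdt} guarantees that $e$ is central in $N$. Note that then $ea = (xa)a = xa^{2} = a$, and by associativity $ae = a(xa) = (ax)a = axa = a$.

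I would then propose the pair
\[ u := a + (1-e), \qquad w := ex + (1-e), \]
and argue that $u$ and $w$ are mutually inverse units, while simultaneously $awa = a$. The inner-inverse identity is the easy part, since it uses only right distributivity:
\[ wa = (ex)a + (1-e)a \;=\; e(xa) + a - ea \;=\; e^{2} + a - a \;=\; e, \]
whence $awa = a(wa) = ae = a$.

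The main obstacle is showing $w \in U(N)$: left distributivity is not available in a right near-ring, so expanding $uw$ or $wu$ by brute force is awkward. The plan is to invoke Lemma~\ref{213} to embed $N$ as a subdirect product in $\prod_{i} N_i$ with each $N_i$ a near-field, and to verify $u_i w_i = w_i u_i = 1_{N_i}$ coordinate-wise. Inside each near-field $N_i$ one of two cases occurs. If $a_i = 0$ then $e_i = a_i x_i = 0$ and both $u_i$ and $w_i$ equal $1_{N_i}$. If $a_i \neq 0$ then $a_i \in U(N_i)$; since $a_i = x_i a_i^{2}$ and $a_i^{2}$ is also a unit, right-cancelling $a_i^{2}$ forces $x_i = a_i^{-1}$, so $e_i = 1_{N_i}$, $u_i = a_i$ and $w_i = a_i^{-1}$. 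Either way $u_i w_i = w_i u_i = 1_{N_i}$, and since $N \hookrightarrow \prod_i N_i$ injectively and $1 \in \prod_i N_i$ has all components $1_{N_i}$, this promotes to $uw = wu = 1$ in $N$. Combining $awa = a$ with $w \in U(N)$ yields unit-regularity of $a$, and arbitrariness of $a \in N$ completes the argument.
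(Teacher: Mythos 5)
Your proof is correct, and note that the paper itself gives no argument here: Lemma~\ref{ffff} is imported verbatim from Mason (Proposition 5 of \cite{gord}), so there is no internal proof to compare against. What you have written is the standard ring-theoretic construction transplanted to near-rings: $e=ax=xa$ is a central idempotent with $ea=ae=a$, the candidate unit is $u=a+(1-e)$ with intended inverse $w=ex+(1-e)$, and $wa=(ex)a+(1-e)a=e+0=e$ uses only right distributivity, whence $awa=ae=a$. The genuinely near-ring-specific obstruction --- verifying $uw=wu=1$ without left distributivity --- is handled legitimately by the subdirect decomposition of Lemma~\ref{213}, which applies because Lemma~\ref{hdt} makes $N$ regular and reduced. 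The coordinatewise case analysis is sound: if $a_i=0$ then $e_i=0x_i=0$ (valid in any right near-ring) and $u_i=w_i=1$; if $a_i\neq 0$ then the nonzero elements of the near-field $N_i$ form a multiplicative group, so $a_i^2$ is cancellable, $x_i=a_i^{-1}$, $e_i=1$, $u_i=a_i$ and $w_i=a_i^{-1}$. Since the projections of the subdirect embedding are surjective homomorphisms they carry $1_N$ to $1_{N_i}$, so injectivity of the embedding pulls $uw=wu=1$ back to $N$, and $a=awa$ with $w\in U(N)$ gives unit-regularity. This is a complete, self-contained derivation of the cited result from the other lemmas already available in the paper; the only stylistic remark is that you could equally well finish by checking $(0:_l u)=0$ directly in the reduced (IFP) near-ring $N$, but the subdirect-product route you chose is clean and fully justified.
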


\begin{proposition}~\label{ff}
Let $N$ be a left strongly regular near-ring and $a\in N$. Then $a^2$ is a regular element of $N$.
\end{proposition}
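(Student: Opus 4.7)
The plan is to produce an explicit quasi-inverse for $a^2$ built out of the element $x$ supplied by left strong regularity of $a$. Since $N$ is left strongly regular, by definition there exists $x \in N$ with $a = xa^2$. Invoking Lemma~\ref{13}, this same $x$ satisfies the two additional identities $a = axa$ and $ax = xa$. These are all the ingredients I will need; notice that I will only use associativity of the semigroup $(N,\star)$, never left distributivity, which is exactly the operation that fails in a right near-ring.

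Next I propose $y := x^{2}$ as the candidate quasi-inverse and verify $a^{2} y a^{2} = a^{2}$. By associativity of $\star$, regroup
\[
a^{2} x^{2} a^{2} \;=\; (a^{2} x)\,(x a^{2}).
\]
For the right-hand factor, the left-strong-regular identity $xa^{2}=a$ gives $xa^{2}=a$ directly. For the left-hand factor, I would compute
\[
a^{2} x \;=\; a(ax) \;=\; a(xa) \;=\; (ax)a \;=\; (xa)a \;=\; xa^{2} \;=\; a,
\]
where the second equality uses $ax=xa$ from Lemma~\ref{13}. Substituting both evaluations yields $a^{2} x^{2} a^{2} = a \cdot a = a^{2}$, which exhibits $a^{2}$ as a regular element with witness $x^{2}$.

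There is essentially no obstacle, since the conclusion that \emph{some} quasi-inverse of $a^{2}$ exists is already implicit once one knows $N$ is regular, which itself follows from Lemma~\ref{13}; the point of the statement is to record the explicit witness $x^{2}$, which will presumably be used in the sequel. The only thing one needs to be careful about is that all manipulations stay within repeated applications of associativity and of the two scalar identities $xa^{2}=a$ and $ax=xa$, since left distributivity of $\star$ over $+$ is unavailable in a right near-ring and no step of the argument above uses it.
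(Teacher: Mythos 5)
Your proof is correct and is essentially the paper's own argument: both use the witness $x^{2}$ and reduce $a^{2}x^{2}a^{2}=(a^{2}x)(xa^{2})$ to $a\cdot a$ via the identities $xa^{2}=a$ and $ax=xa$ from Lemma~\ref{13}. The paper merely runs the computation in the other direction, starting from $a^{2}=(axa)(axa)$ and regrouping to $a^{2}x^{2}a^{2}$.
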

\begin{proof}
Let  $a\in N$.  Then $a=xa^2$ for some $x\in N$.  By Lemmas~\ref{hdt} and~\ref{13},   $a=axa$, and $ax=xa$ is a central idempotent.  Hence $a^2=aa=(axa)(axa)=(aax)(xaa)=a^2(xx)a^2=a^2x^2a^2$.
\end{proof}

\begin{proposition}~\label{ff}
Every  left strongly regular  near-ring $N$ is left morphic.
\end{proposition}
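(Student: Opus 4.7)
The plan is to reduce left-morphicness of an arbitrary element $a\in N$ to left-morphicness of a naturally associated central idempotent, which has already been handled in Example~\ref{hth}.

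Start with $a\in N$. Left strong regularity furnishes $x\in N$ with $a=xa^{2}$, and Lemma~\ref{13} upgrades this to
$$a=axa \qquad \text{and} \qquad ax=xa.$$
Set $e:=ax=xa$. Then $e^{2}=(ax)(xa)=a(xa)x=(axa)x=ax=e$, so $e$ is idempotent, and Lemma~\ref{hdt} ensures $e$ is central in $N$. Associativity gives $ea=(xa)a=xa^{2}=a$, and by centrality $ae=ea=a$ as well.

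The next step is to establish the two equalities
$$Na=Ne \qquad \text{and} \qquad (0:_{l}a)=(0:_{l}e).$$
Using only associativity of $\star$ together with the identities $a=ea=ae$ and $e=xa=ax$: the inclusions $Na\subseteq Ne$ and $Ne\subseteq Na$ follow from $ra=r(ae)=(ra)e$ and $re=r(xa)=(rx)a$ respectively; for the annihilators, $ra=0$ forces $re=r(ax)=(ra)x=0$, while $re=0$ forces $ra=r(ea)=(re)a=0$.

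To conclude, invoke Example~\ref{hth}: because every idempotent of $N$ is central (Lemma~\ref{hdt}), the central idempotent $e$ is left morphic with witness $1-e$, so $Ne=(0:_{l}(1-e))$ and $N(1-e)=(0:_{l}e)$. Substituting the equalities of the previous paragraph yields
$$Na=(0:_{l}(1-e)) \qquad \text{and} \qquad N(1-e)=(0:_{l}a),$$
whence Lemma~\ref{1}(2) gives that $a$ is left morphic; since $a$ was arbitrary, $N$ is left morphic.

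The only subtlety to watch is the absence of left distributivity in a right near-ring: one must resist the temptation to distribute an element of $N$ across the sum inside $1-e$. All the identities above are monomials in $r,a,x,e$ and rely solely on associativity together with zero-symmetry, which is precisely why the ring-theoretic picture carries over.
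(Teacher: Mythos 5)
Your proof is correct, and it shares the paper's central idea (reduce to the central idempotent $e=ax=xa$ and invoke Example~\ref{hth}) but executes the reduction differently. The paper first passes through Lemma~\ref{ffff} to write $a=aua$ with $u$ a unit, identifies $au=ua$ as a central idempotent, factors $a=(au)u^{-1}$ as (left morphic idempotent)$\cdot$(unit), and finishes with Proposition~\ref{2}. You instead stay with the element $x$ from $a=xa^{2}$, verify by bare associativity that $Na=Ne$ and $(0:_{l}a)=(0:_{l}e)$, and then hand the witness $b=1-e$ from Example~\ref{hth} directly to Lemma~\ref{1}(2). Your route is more self-contained: it needs neither Lemma~\ref{ffff} nor Proposition~\ref{2}, and it sidesteps a slight looseness in the paper, where Lemma~\ref{13} (stated for an $x$ with $a=xa^{2}$) is applied to the unit $u$ coming from unit-regularity rather than to such an $x$. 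The paper's version, in exchange, is shorter and displays the structural fact that every element of a left strongly regular near-ring factors as a central idempotent times a unit. One tiny presentational quibble: in your computation of $e^{2}$ the step $(ax)(xa)=a(xa)x$ silently uses $xa=ax$ once; writing $e^{2}=(xa)(xa)=x(axa)=xa=e$ avoids even that.
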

\begin{proof}
Let $a\in N$.   By Lemma~\ref{ffff},  $a=aua$ with $u\in U(N)$.  Using Lemma~\ref{hdt} and Lemma~\ref{13}, $au=ua$ is a central idempotent.
 Let $v$ be the inverse of $u$.  Then $a=a(uv)=(au)v$ is a product of a central idempotent and a unit which are both left morphic (according to Example~\ref{hth}).   Proposition~\ref{2} completes the proof.
\end{proof}

\begin{corollary}~\label{fff}
Every  strongly regular near-ring $N$ is left morphic.
\end{corollary}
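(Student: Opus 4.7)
The plan is to observe that this corollary is essentially immediate from the proposition immediately preceding it, once we unpack the definitions. Recall from Section 2 that a near-ring $N$ is called \emph{strongly regular} when it is both left and right strongly regular; in particular, any strongly regular near-ring is left strongly regular. Proposition~\ref{ff} (the preceding result) already asserts that every left strongly regular near-ring is left morphic, so the one-line argument is simply to invoke that proposition.

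Concretely, I would write: let $N$ be strongly regular. By definition, $N$ is left strongly regular, so for every $a\in N$ there exists $x\in N$ with $a=xa^2$. Applying Proposition~\ref{ff} to $N$ gives that every element of $N$ is left morphic, hence $N$ is a left morphic near-ring. There is nothing more to check; the right strong regularity plays no role in deriving left morphicity, but it is a hypothesis we are free to discard.

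The only possible obstacle here is conceptual rather than technical: one must be sure that the definition of strongly regular in this paper truly means ``left and right strongly regular'' (the preliminaries confirm this) and that Proposition~\ref{ff} is used in its full strength, i.e., for an arbitrary element of $N$, not only for units or idempotents. Since Proposition~\ref{ff} has already done the real work (factoring $a=(au)v$ as a product of a central idempotent $au$ and a unit $v$, each of which is left morphic by Example~\ref{hth}, and then applying Proposition~\ref{2}), the corollary requires no new ideas. I would therefore keep the proof to one or two sentences.
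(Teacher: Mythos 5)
Your proposal is correct and matches the paper's own proof: both simply note that a strongly regular near-ring is in particular left strongly regular and then invoke Proposition~\ref{ff}. Nothing further is needed.
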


\begin{proof}
Strongly regular near-rings are both left and right strongly regular.   By Proposition~\ref{ff}, they are  left  morphic.
\end{proof}

\noindent H. E. Heatherly and J. R. Courville in  \cite{heatherly1999near} initiated the study of near-rings with  a special condition on idempotents.
In Theorem \ref{217},  we extend their ideas and show that the notion of left morphic idempotent elements for near-rings had been already studied  in  \cite{heatherly1999near}  although was not called so at that time.
In preparation for this result, we first  prove Lemma~\ref{this}.

\begin{lemma}~\label{this}
Let $\omega$ be an idempotent and left-morphic element of the near-ring $N$.   Then $1-(1-\omega)=\omega,1-\omega$ is idempotent, $N\omega=(0:_l1-\omega),N(1-\omega)=(0:_l\omega)$ and $1-\omega$ is left-morphic.
\end{lemma}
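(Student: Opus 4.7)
The plan is to leverage the $N$-ideal structure of $N\omega$ (which comes from the left morphic hypothesis) and of $(0:_l\omega)$ (which is automatic) to realize $N$ as the internal direct product of the two, viewed as normal subgroups of $(N,+)$. This single structural fact simultaneously delivers the additive commutativity between the factors (handling the purely additive identity in part~(1)) and, via the $N$-ideal absorption property, a substitute for the missing left distributivity (handling the multiplicative assertions).

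First I would show that every $x\in N$ decomposes as $x=(x-x\omega)+x\omega$ with $x-x\omega\in(0:_l\omega)$ and $x\omega\in N\omega$, and that $N\omega\cap(0:_l\omega)=\{0\}$; both facts follow from right distributivity and $\omega^2=\omega$. Since both summands are $N$-ideals, hence normal subgroups of $(N,+)$, standard group theory then gives the internal direct product, so elements of the two factors commute additively. Applying this decomposition to $1$ and right-multiplying by $\omega$ identifies the components as $u=1-\omega$ and $v=\omega$, so that $1=(1-\omega)+\omega=\omega+(1-\omega)$; part (1) and the additive commutativity $\omega+1=1+\omega$ (equivalently $-\omega+1=1-\omega$) follow at once.

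The crux, and the main obstacle, is proving $\omega(1-\omega)=0$, since in a right near-ring one cannot simply distribute $\omega$ on the left. I would apply the $N$-ideal absorption $r(l+n)-rn\in N\omega$ with $r=\omega$, $l=-\omega\in N\omega$, $n=1$; additive commutativity turns $-\omega+1$ into $1-\omega$, so $\omega(1-\omega)=\omega+m$ for some $m\in N\omega$. Right-multiplying by $\omega$, using associativity together with the purely right-distributive identity $(1-\omega)\omega=\omega-\omega^2=0$, forces $m=-\omega$, hence $\omega(1-\omega)=0$. Part (2) is then a one-line right-distributive computation: $(1-\omega)^2=(1-\omega)-\omega(1-\omega)=1-\omega$.

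For parts (3) and (4), the $\subseteq$ halves are immediate from associativity together with $\omega(1-\omega)=0=(1-\omega)\omega$. For the reverse inclusions I would invoke $N$-ideal absorption once more: for (3), decomposing $y\in(0:_l(1-\omega))$ as $u+v$ and applying absorption with $(r,l,n)=(u,\omega,1-\omega)$ forces $u=u\cdot 1\in N\omega\cap(0:_l\omega)=\{0\}$, so $y=v\in N\omega$; for (4), absorption with $(r,l,n)=(y,-\omega,1)$ gives $y(1-\omega)=y+m$ with $m\in N\omega$, and right-multiplying by $\omega$ forces $m=0$, so $y=y(1-\omega)\in N(1-\omega)$. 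Part (5) is then immediate from Lemma~\ref{1} applied with $a=1-\omega$ and $b=\omega$.
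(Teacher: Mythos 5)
Your proof is correct, and the pivotal computation is the same as the paper's: both arguments obtain the key identity $\omega(1-\omega)=0$ by applying the $N$-ideal absorption property of $N\omega$ to produce $\omega(1-\omega)-\omega\in N\omega$ and then right-multiplying by $\omega$ (where $(1-\omega)\omega=\omega-\omega^2=0$ kills the left side and $m\omega=m$ for $m\in N\omega$ pins down the error term). Where you diverge is in the surrounding scaffolding. The paper proves only $N\omega=(0:_l(1-\omega))$ by hand (via the absorption step $x(1-\omega)-x\in N\omega$ for $x\in(0:_l(1-\omega))$, essentially your part (4) argument transplanted) and then outsources the remaining claims, namely $1-(1-\omega)=\omega$ and $N(1-\omega)=(0:_l\omega)$, to the cited Heatherly--Courville results on idempotents with $e(1-e)=0$. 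You instead first establish the internal direct decomposition $N=(0:_l\omega)\oplus N\omega$ from $x=(x-x\omega)+x\omega$, trivial intersection, and normality of both summands, and then read off the additive commutativity, part (1), and the two annihilator identities from that single structural fact. This makes your proof self-contained and arguably more transparent: it exhibits the Peirce-type decomposition that is really driving all of Theorem~\ref{217}, rather than quoting it. The only cost is a little extra care with the non-abelian addition (e.g.\ writing $\omega(1-\omega)=m'+\omega$ versus $\omega+m$, which normality of $N\omega$ lets you interchange), and you handle that correctly.
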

\begin{proof}
Since $\omega$ is left-morphic, $N\omega$ is an $N$-ideal of $N$ which contains $-\omega=(-1)\omega$ and so $\omega(1-\omega)-1\omega\in N\omega$.  Thus $\omega(1-\omega)-\omega=n\omega$ for some $n\in N$.  Right multiplication by $\omega$ gives $-\omega=n\omega$ and hence $\omega(1-\omega)=0$.  This proves $N\omega\subseteq(0:_l1-\omega)$.  Let $x\in (0:_l1-\omega)$.  Then $x(1-\omega)=0$ and $-x=x(1-\omega)-x\in N\omega$ since $N\omega$ is an $N$-ideal of $N$.  Thus $x\in N\omega$ and $N\omega=(0:_l1-\omega)$ follows.  Further, $\omega(1-\omega)=0$ implies $1-\omega$ is idempotent.  By \cite{heatherly1999near}, we then have $1-(1-\omega)=\omega$ and $N(1-\omega)=(0:_l\omega)$.  Thus $1-\omega$ is left-morphic by Lemma~\ref{1}.
\end{proof}
\begin{theorem}~\label{217}
Let $N$ be a  near-ring and  $e$  be an idempotent  element of $N$.  The following statements  are equivalent:
\begin{enumerate}
\item [\emph{(1)}] $e$ is  left-morphic,
\item [\emph{(2)}] $Ne=(0:_l1-e)$,
\item [\emph{(3)}] $x(1 - e) = -xe + x$ for all $x\in N$,
\item [\emph{(4)}] $(0:_le)\cap (0:_l1-e)=0$ and $ e(1- e) =0$,
\item [\emph{(5)}] $x(1-e)=x-xe$ for all $x\in N$,
\item [\emph{(6)}] $N(1-e)=(0:_le)$ and $e(1- e) =0$,
\item [\emph{(7)}] $1-e$ is  left-morphic and idempotent.
\end{enumerate}
\end{theorem}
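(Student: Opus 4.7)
Proof plan. The strategy is to route all seven conditions through a cycle that uses Lemma \ref{this} twice---once applied to $e$, once to $1-e$. Lemma \ref{this} applied directly to an idempotent left-morphic $e$ simultaneously delivers $Ne=(0:_l1-e)$, $N(1-e)=(0:_le)$ and $e(1-e)=0$, plus idempotence and left-morphicity of $1-e$; so it handles $(1)\Rightarrow(2)$, $(1)\Rightarrow(6)$ and $(1)\Rightarrow(7)$ at a single stroke. For $(6)\Rightarrow(1)$, once $Ne=(0:_l1-e)$ and $N(1-e)=(0:_le)$ are both in hand, the First Isomorphism Theorem applied to the $N$-module map $n\mapsto n(1-e)$ yields $N/Ne=N/(0:_l1-e)\cong N(1-e)=(0:_le)$, which is exactly (1). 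The theorem will then follow once I prove the inner cycle $(2)\Rightarrow(3)\Rightarrow(4)\Rightarrow(5)\Rightarrow(6)\Rightarrow(2)$ and the implication $(7)\Rightarrow(1)$.

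Every step of the inner cycle reduces to a short computation using right distributivity together with the two identities $e(1-e)=0$ (which follows from any of (2)--(6)) and $(1-e)(1-e)=1-e$ (a right-distributive consequence of the former). For $(2)\Rightarrow(3)$, I set $y:=-xe+x-x(1-e)$ and verify both $y(1-e)=0$, so $y\in Ne$ and hence $y=me$ for some $m$, and independently $ye=0$; but $y=me$ forces $ye=me^{2}=me=y$, hence $y=0$. Step $(3)\Rightarrow(4)$ is immediate: $x=e$ in (3) yields $e(1-e)=0$, and any $y\in(0:_le)\cap(0:_l1-e)$ satisfies $0=y(1-e)=-ye+y=y$. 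For $(4)\Rightarrow(5)$, the twin element $z:=x(1-e)+xe-x$ has $ze=0$ and $z(1-e)=0$, forcing $z=0$ by (4). In $(5)\Rightarrow(6)$, the inclusion $N(1-e)\subseteq(0:_le)$ comes from $(x(1-e))e=(x-xe)e=0$, while for any $y\in(0:_le)$ identity (5) gives $y=y-ye=y(1-e)\in N(1-e)$. Finally, for $(6)\Rightarrow(2)$, given $y(1-e)=0$ I observe $(y-ye)e=0$, write $y-ye=m(1-e)$ from $N(1-e)=(0:_le)$, and then $(1-e)^{2}=1-e$ combined with right-distributive expansion of $(y-ye)(1-e)$ forces $m(1-e)=0$, i.e.\ $y=ye\in Ne$.

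For $(7)\Rightarrow(1)$, I apply Lemma \ref{this} with $\omega=1-e$, which is idempotent and left-morphic by (7); this delivers $1-(1-e)$ as a left-morphic element. To identify this with $e$ I reinvoke the Heatherly--Courville identity used in Lemma \ref{this}: since $1-e$ is idempotent, expanding $(1-e)(1-e)=1-e$ by right distributivity gives $e(1-e)=0$, and Heatherly--Courville then forces $1-(1-e)=e$, so $e$ itself is left-morphic. The main obstacle throughout is the non-Abelian, non-left-distributive setting: every manipulation must keep the additive word order intact and apply right distributivity only on the outer left factor, and I expect the tightest bookkeeping to arise in $(2)\Rightarrow(3)$ and in the final identification $1-(1-e)=e$ that closes $(7)\Rightarrow(1)$.
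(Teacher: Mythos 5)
Your architecture is essentially the paper's: the paper also routes $(1)\Rightarrow(2),(6),(7)$ and $(7)\Rightarrow(1)$ through Lemma~\ref{this}, and closes the loop via condition (2) of Lemma~\ref{1}. The genuine difference is that the paper outsources the equivalences $(2)\Leftrightarrow(3)\Leftrightarrow(4)\Leftrightarrow(5)$ to Heatherly--Courville's Proposition~2.1 and uses their Corollary~2.2 for $(2)\Rightarrow(1)$, $(2)\Rightarrow(6)$ and $(6)\Rightarrow(2)$, whereas you prove the inner cycle $(2)\Rightarrow(3)\Rightarrow(4)\Rightarrow(5)\Rightarrow(6)\Rightarrow(2)$ by direct right-distributive computation. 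I checked your computations (the auxiliary elements $y$ and $z$, the two identities $e(1-e)=0$ and $(1-e)^2=1-e$, and the cancellation $y=me\Rightarrow y=ye$) and they are correct; this buys a self-contained proof and, incidentally, repairs a weakness in the paper's own $(6)\Rightarrow(2)$, which invokes Lemma~\ref{this} with $\omega=1-e$ before knowing that $1-e$ is left-morphic. Your $(6)\Rightarrow(1)$ via the First Isomorphism Theorem is just Lemma~\ref{1}(2) with $b=1-e$ and is fine.

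The one step that is under-justified is the identification $1-(1-e)=e$ in $(7)\Rightarrow(1)$. You derive only $e(1-e)=0$ from the idempotence of $1-e$ and then assert that Heatherly--Courville ``forces'' $1-(1-e)=e$. But $1-(1-e)=1+e-1$ is an additive conjugate of $e$, and the identity $1+e=e+1$ is exactly the content of condition (3) at $x=1$; it is not a consequence of $e(1-e)=0$ alone (that is only half of condition (4), the other half being $(0:_le)\cap(0:_l1-e)=0$, which you do not yet have at that point). The patch is available inside your own argument: Lemma~\ref{this} applied to $\omega=1-e$ gives $N(1-e)=(0:_l\,1-(1-e))$, i.e.\ condition (2) for the idempotent $1-e$; your already-proved implication $(2)\Rightarrow(3)$, applied to $1-e$ in place of $e$, yields $x(1-(1-e))=-x(1-e)+x$ for all $x$, and setting $x=1$ gives $1-(1-e)=-(1-e)+1=(e-1)+1=e$. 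With that substitution the left-morphicity of $1-(1-e)$ supplied by Lemma~\ref{this} is exactly statement (1). (The paper's own one-line proof of $(7)\Rightarrow(1)$ leaves this identification implicit as well, so you are in good company, but it should be made explicit.)
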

\begin{proof}~
Since the implications $(2)\Leftrightarrow (3)\Leftrightarrow (4)\Leftrightarrow (5)$ are by \cite[Proposition 2.1]{heatherly1999near},  we prove $(2)\Rightarrow(1)\Rightarrow(2)\Rightarrow(6)\Rightarrow(2)$ and $(1)\Rightarrow(7)\Rightarrow(1)$.
\begin{itemize}
  \item [(2)$\Rightarrow$(1).]Suppose that (2) holds; it suffices to show that $N(1-e)=(0:_le)$ by Lemma~\ref{1}.   In view of  \cite[Corollary 2.2]{heatherly1999near}, $1-e$ is an idempotent
      element of $N,1-(1-e)=e$ and $N(1-e)=(0:_l(1-(1-e))).$  Thus $N(1-e) =(0:_le)$.   Therefore, applying Lemma \ref{1}, $Ne=(0:_l1-e)$ and $N(1-e)=(0:_le)$ implies that $e$ is left-morphic.
 \item [(1)$\Rightarrow$(2)]  and (1)$\Rightarrow$(7)  follow immediately from Lemma~\ref{this}  with $\omega=e$.

\item [(7)$\Rightarrow$(1).]  This follows by Lemma~\ref{this} with $\omega=1-e$.

\item [(2)$\Rightarrow$(6).] Following  \cite[Corollary 2.2]{heatherly1999near}, $1-e$ is an idempotent element of $N$ and $1-(1-e)=e$.    Thus $e(1-e)=0$ and   $N(1-e)=(0:_l(1-(1-e)))=(0:_le)$.

\item [(6)$\Rightarrow$(2).]  Since the property  $e(1- e) =0$ gives  $(1-e)$ idempotent, using Lemma~\ref{this} with $\omega=1-e$ yields (2).
\end{itemize}
\end{proof}

\begin{Ex}~\label{cccxx}
Consider the commutative ring $N$  defined on the Klein four group $(N,+)$ with $N = \{0, a, b, c\}$,
whose addition   and  multiplication is given by the two tables:
\begin{center}
\begin{minipage}[t]{0.3\linewidth}

\begin{tabular}{c||cccc}\label{ssss}
     $+$ &    $0$ & $a$ & $b$ & $c$ \\\hline\hline
     $0$ &    $0$ & $a$ & $b$ & $c$ \\
     $a$ &    $a$ & $0$ & $c$ & $b$ \\
     $b$ &    $b$ & $c$ & $0$ & $a$ \\
     $c$ &    $c$ & $b$ & $a$ & $0$ \\
     \hline\hline
   \end{tabular}
\end{minipage}
\begin{minipage}[t]{0.3\linewidth}

\begin{tabular}{c||cccc}
     $\star$& $0$ & $a$ & $b$ & $c$ \\\hline\hline
     $0$ &    $0$ & $0$ & $0$ & $0$ \\
     $a$ &    $0$ & $a$ & $a$ & $0$ \\
     $b$ &    $0$ & $a$ & $b$ & $c$ \\
     $c$ &    $0$ & $0$ & $c$ & $c$ \\
     \hline\hline
   \end{tabular}
\end{minipage}
\end{center}
We can verify that $N$  is left morphic  since $Na=(0:_lc)$, $Nc=(0:_la)$,  $(0:_lb)=\{0\}$ and $(0:_l0)=Nb$.
\end{Ex}

\noindent Near-rings in which every element is idempotent are called \emph{Boolean}.   In light of Example~\ref{cccxx}  and a property exhibited by a left morphic idempotent element of a  near-ring in Theorem~\ref{217}, one is    led to  the following:

\begin{proposition}~\label{cccxi}
Let $N$ be a Boolean near-ring.  Then $N$ is a commutative morphic ring.
\end{proposition}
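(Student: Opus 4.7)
The plan is to first upgrade $N$ into a commutative Boolean ring, and then to invoke the standard Boolean-ring fact that every element is morphic. Only right distributivity is available at the outset, so the delicate part is showing that $(N,+)$ is abelian and that multiplication is commutative (after which left distributivity will come for free). Characteristic two is immediate: from $(1+1)^{2} = 1+1$ expanded via right distributivity and the left identity we get $(1+1) + (1+1) = 1+1$, hence $1+1 = 0$, and then $x+x = (1+1)x = 0$ for every $x$. Because every element is its own additive inverse, $-(x+y)$ equals both $y+x$ and $x+y$, so $(N,+)$ is abelian.

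The central step is the identity $ab = bab$ for all $a,b \in N$. I would obtain it by expanding $\bigl((a{+}1)b\bigr)^{2}$ using only right distributivity, associativity, and the fact that every element (here $(a{+}1)b$ and $ab$) is idempotent. The computation
\[
\bigl((a{+}1)b\bigr)\bigl((a{+}1)b\bigr) = \bigl(aba + ba + ab + b\bigr)b = abab + bab + ab + b
\]
must equal $(a{+}1)b = ab + b$, giving $abab + bab = 0$; since $abab = (ab)^{2} = ab$, we conclude $ab = bab$.

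Commutativity is now extracted by substituting $b \mapsto b+1$ in the identity $ab = bab$. Expanding
\[
(b{+}1)\,a\,(b{+}1) = (ba + a)(b{+}1) = bab + ba + a(b{+}1) = ab + ba + a(b{+}1),
\]
and comparing with $a(b{+}1)$ forces $ab + ba = 0$, i.e.\ $ab = ba$. Once multiplication is commutative, left distributivity is immediate from right distributivity, so $N$ is a commutative Boolean ring.

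For the morphic conclusion, I would use the standard argument: for any $a \in N$, the decomposition $1 = a + (1-a)$ together with $a(1-a) = 0$ and $(1-a)^{2} = 1-a$ yields $N = Na \oplus N(1-a)$, and $N(1-a) = (0:_{l}a)$ because if $ya = 0$ then $y = y(1-a) \in N(1-a)$. Hence $N/Na \cong N(1-a) = (0:_{l}a)$, so every $a$ is left morphic (and then right morphic by commutativity). The main obstacle is the commutativity step: without left distributivity the standard Boolean-ring trick of expanding $(x+y)^{2}$ on both sides is unavailable, and the workaround is precisely the two-stage derivation above, which uses right distributivity only on the outer factor and exploits the additive substitution $b \mapsto b+1$ to force the commutator to vanish.
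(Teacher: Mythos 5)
Your reduction to characteristic $2$ and the resulting commutativity of $(N,+)$ are fine (and in fact replace the paper's appeal to Bell's theorem by a direct computation), and the closing morphic argument is standard once $N$ is known to be a commutative Boolean ring. But the central step --- multiplicative commutativity --- has a genuine gap: both of your key expansions secretly use the left distributive law, which is exactly what a right near-ring lacks. In $\bigl((a{+}1)b\bigr)^{2}=\bigl((a{+}1)\,b\,(a{+}1)\bigr)b$, to rewrite $(a{+}1)\,b\,(a{+}1)$ as $aba+ba+ab+b$ you must expand either $b(a{+}1)$ as $ba+b$ or $(ab)(a{+}1)$ as $aba+ab$; right distributivity only lets you split the \emph{left} factor of a product, so $(ab{+}b)(a{+}1)=(ab)(a{+}1)+b(a{+}1)$ is as far as you can go, and neither summand can be expanded further. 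The same defect occurs in the second display, where $(ba)(b{+}1)=bab+ba$ is again an instance of left distributivity. So the identity $ab=bab$ and the subsequent derivation of $ab=ba$ are not established, and the whole argument for commutativity of multiplication collapses; you cannot then recover left distributivity either, since that was to follow \emph{from} commutativity.

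The paper avoids this by routing through the regularity machinery rather than elementary identities: a Boolean near-ring is reduced (if $a^{n}=0$ then $a=a^{2}=\cdots=0$) and regular (take $x=1$ in $a=axa$), hence by Lemma~\ref{hdt} (equivalently \cite[Theorem 9.158]{Pilz:Nearrings}) all idempotents --- that is, all elements --- are central, which gives $xy=yx$ and then $x(y+z)=(y+z)x=yx+zx=xy+xz$. Morphicity then follows because $a=1\cdot a^{2}=a^{2}\cdot 1$ makes $N$ strongly regular, so Corollary~\ref{fff} applies. If you want to keep an elementary flavour, your characteristic-$2$ and abelian-group steps can stay, but the commutativity step must be replaced by (or reproved along the lines of) the centrality of idempotents in a reduced regular near-ring; that is a substantive theorem, not a two-line computation.
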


\begin{proof}
By \cite[Theorem 3]{bell1970near}, $(N,+)$ is commutative.  $N$ is (left and right) strongly regular and so is morphic by Corollary~\ref{fff}.  By Lemma~\ref{hdt}, all the elements of $N$ are central.  Now $$x(y+z)=(y+z)x=yx+zx=xy+xz$$ and we have the left distributive law.
\end{proof}

\noindent A near-ring $N$ is called a \emph{left duo near-ring} if the left ideals of $N$ are ideals of $N$.  Every left strongly regular near-ring is a  left duo near-ring.

\begin{Ex}~\label{gggg}
Let  $D$  be a division ring    and  $R:=M_n(D)$ be the $n\times n$ matrix ring over $D,n\geq 2$.   It is well known  that  $R$  is a left and right  morphic regular ring, see~\cite{nicholson2004rings}.  However, $R$ is neither left nor right duo, hence neither left nor right strongly regular.  In particular,
$$\left[\begin{array}{cc}
0 & 1 \\
0 & 0 \\
\end{array}
\right]\in R~\text{and}~
\left[\begin{array}{cc}
0 & 1 \\
0 & 0 \\
\end{array}
\right]^2=\left[
            \begin{array}{cc}
              0 & 0 \\
              0 & 0 \\
            \end{array}
          \right]
.$$
\end{Ex}

\noindent Example~\ref{gggg} is a left morphic regular ring  which is not left strongly regular.  However, if $N$ is a reduced near-ring, then it fulfills the IFP-property.  But~\cite[Proposition 1.6.35]{ferrero2012nearrings} implies that  $N$ has the IFP-property if and only if for all $a\in N$,  $(0:_la)$ and $(0:_ra)$ are  ideals of $N$. Hence regularity conditions and left morphic left duo near-rings are linked by Proposition~\ref{226} below.

\begin{proposition}~\label{226}
Let $N$ be a  near-ring.  Then  the following  are equivalent:
\begin{enumerate}
\item[\emph{(1)}] $N$ is a reduced left morphic near-ring,
\item[\emph{(2)}] $N$ is a left strongly regular near-ring,
\item[\emph{(3)}] $N$ is a regular and left duo near-ring.
\end{enumerate}
\end{proposition}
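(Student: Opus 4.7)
The plan is to establish $(2) \Leftrightarrow (3)$ as the routine equivalence, deduce $(2) \Rightarrow (1)$ directly from results already in the paper, and handle the nontrivial direction $(1) \Rightarrow (2)$ by combining the Insertion-of-Factors-Property of a reduced near-ring with the left morphic hypothesis applied to both $a$ and $a^{2}$.

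For $(2) \Leftrightarrow (3)$, a left strongly regular near-ring is regular by Lemma~\ref{hdt}, and the paragraph preceding Example~\ref{gggg} asserts that every left strongly regular near-ring is left duo, giving $(2) \Rightarrow (3)$. Conversely, if $N$ is regular and left duo, then for $a\in N$ pick $x$ with $a=axa$; since $Na$ is a left ideal and hence an ideal by the duo hypothesis, we have $ax\in Na$, so $ax=ya$ for some $y\in N$, and then $a=axa=ya^{2}$, proving left strong regularity. The implication $(2)\Rightarrow(1)$ is immediate: Lemma~\ref{hdt} gives that $N$ is reduced, and Proposition~\ref{ff} gives that $N$ is left morphic.

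The heart of the argument is $(1)\Rightarrow(2)$. Let $a\in N$. Since $N$ is reduced, the discussion before Proposition~\ref{226} gives that $N$ has IFP and therefore $(0:_l c)$ is an ideal for every $c\in N$. I first claim that $(0:_la)=(0:_la^{2})$: the inclusion $\subseteq$ is trivial, and if $xa^{2}=0$ then $(xa)\cdot a=0$, so by IFP $(xa)\cdot n\cdot a=0$ for every $n\in N$; taking $n=x$ gives $(xa)^{2}=0$ and the reduced hypothesis forces $xa=0$. Next, because $a$ and $a^{2}$ are both left morphic, Lemma~\ref{1} supplies $b,b'\in N$ with
\[
Na=(0:_lb),\quad Nb=(0:_la),\quad Na^{2}=(0:_lb'),\quad Nb'=(0:_la^{2}).
\]
The identity $(0:_la)=(0:_la^{2})$ then yields $Nb=Nb'$, so each of $b,b'$ lies in the other's left $N$-orbit. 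Applying IFP once more shows that $Nb=Nb'$ forces $(0:_lb)=(0:_lb')$: if $xb=0$ then $xnb=0$ for all $n$, and writing $b'=n_{0}b$ gives $xb'=xn_{0}b=0$, and symmetrically. Combining the displayed equalities gives $Na=(0:_lb)=(0:_lb')=Na^{2}$. Since $a=1\cdot a\in Na=Na^{2}$, we get $a=xa^{2}$ for some $x\in N$, so $a$ is left strongly regular.

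The main obstacle is precisely the passage $Na=Na^{2}$; one might naively try to split the isomorphism $N/Na\cong(0:_la)$ as an internal direct-sum decomposition of $N$ (as in Example~\ref{ccc}), but in a general non-abelian near-ring it is not clear how to realize such a splitting concretely. Going through the annihilator side via $(0:_la)=(0:_la^{2})$ and Lemma~\ref{1} for both $a$ and $a^{2}$ bypasses this difficulty entirely, using only IFP and the no-nilpotent hypothesis. Once left strong regularity is in hand, $N$ is automatically reduced and regular (Lemma~\ref{hdt}) and left duo (the duo statement quoted above), closing the circle of implications.
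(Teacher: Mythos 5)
Your direction $(1)\Rightarrow(2)$ is correct and takes a genuinely different route from the paper's. The paper establishes $(0:_l(0:_rNa))=Na$ and then invokes an external regularity criterion (\cite[Proposition 3.4.14]{ferrero2012nearrings}) followed by Lemma~\ref{hdt}; you instead apply Lemma~\ref{1} to both $a$ and $a^2$, use IFP together with reducedness to get $(0:_la)=(0:_la^2)$, hence $Nb=Nb'$, hence $(0:_lb)=(0:_lb')$, hence $Na=Na^2$, and read off $a=xa^2$ directly. Every step checks, and your argument is more self-contained: it lands on left strong regularity without first passing through regularity. The directions $(2)\Rightarrow(1)$ and $(2)\Rightarrow(3)$ coincide with the paper's.

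There is, however, a genuine gap in your proof of $(3)\Rightarrow(2)$. You assert that $Na$ is a left ideal of $N$ and then use the left duo hypothesis to promote it to an ideal, so that $ax\in (Na)N\subseteq Na$. But in a near-ring $Na$ is only an $N$-subgroup: it is the image of the endomorphism $n\mapsto na$ of $(N,+)$ and is closed under left multiplication, yet it need not be a normal subgroup of $(N,+)$ nor satisfy the condition $r(l+m)-rm\in Na$ required of a left ideal in Definition~\ref{xxx}, precisely because left distributivity fails. Since the paper defines ``left duo'' as ``left ideals are ideals,'' the hypothesis tells you nothing about $Na$, and I do not see how to justify that $Na$ is a left ideal without already knowing left strong regularity. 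The object that \emph{is} always a left ideal is $(0:_la)$ (as stated in the preliminaries), and this is what the paper exploits: under the duo hypothesis the annihilators become ideals, which yields the IFP via \cite[Proposition 1.6.35]{ferrero2012nearrings}, and then regularity together with IFP gives left strong regularity by \cite[Proposition 4.2]{Groenewald}. You should replace your $(3)\Rightarrow(2)$ argument with this.
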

\begin{proof}
$(1)\Rightarrow(2)$. Let $a\in N$.  By the left morphic hypothesis, there exists some $b\in N$ such that $Na=(0:_lb)$.   Further,  $N$ fulfills the IFP property because it is reduced.  Thus $(0:_l(0:_ra))=(0:_l(0:_rNa))=(0:_l(0:_r(0:_lb)))=(0:_lb)=Na$ so that  $(0:_l(0:_rNa))=Na$ for every $a\in N$.   By \cite[Proposition 3.4.14]{ferrero2012nearrings}, $N$ is regular  and therefore left strongly regular by Lemma~\ref{hdt}.

$(2)\Rightarrow(1)$. Given (2),  $N$ is left morphic by Proposition~\ref{ff} and reduced by  Lemma~\ref{hdt}.

$(3)\Rightarrow(2)$. Let $a\in N$.  Then each left annihilator set of $a$  is an ideal because $N$ is left duo.   In this case $N$ fulfills the IFP-property and so by~\cite[Proposition 4.2]{Groenewald}, $N$ is left strongly regular.

$(2)\Rightarrow(3)$. This always holds since every left strongly regular near-ring is regular by Lemma~\ref{hdt} and every $N$-ideal of  $N$ is an ideal.
\end{proof}

\begin{theorem}\label{62}
Every left morphic regular near-ring is unit-regular.
\end{theorem}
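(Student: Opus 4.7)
The plan is to pick arbitrary $a \in N$ and exhibit a unit $u$ with $aua = a$. By regularity, choose $x \in N$ with $a = axa$, and replace $x$ by $xax$ so that also $xax = x$. By left morphicity and Lemma~\ref{1}, there is $b \in N$ with $Na = (0:_l b)$ and $Nb = (0:_l a)$, from which $ab = ba = 0$ follows, since $a \in Na$ and $b \in Nb$. The candidate unit is $u := x + b$.

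A short application of right distributivity gives $ua = (x+b)a = xa + ba = xa$, hence $aua = a(xa) = axa = a$. To show $u \in U(N)$ I would invoke Proposition~\ref{64}: since $N$ is left morphic, so is $u$, and $u$ is a unit if and only if $(0:_l u) = \{0\}$. Setting $e := ax$ and $f := xa$ (both left morphic idempotents), Theorem~\ref{217} provides $(0:_l f) = N(1-f)$ together with $y(1-e) = y - ye$ for all $y \in N$. Using $xax = x$ (so $xe = xax = x$) and $ba = 0$ (so $be = (ba)x = 0$), right distributivity collapses $u(1-e) = x(1-e) + b(1-e)$ into the key identity $u(1-e) = b$.

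For $(0:_l u) = \{0\}$: given $yu = 0$, right-multiplication by $a$ forces $yf = 0$, so $y = n(1-f) = n - nf$ for some $n \in N$, and right distributivity gives $nu = nfu$. Right-multiplying this by $(1-e)$ and using both the key identity and $fb = x(ab) = 0$ reduces it to $nb = 0$, so $n \in (0:_l b) = Na$; writing $n = ta$ yields $nf = t(axa) = ta = n$, and therefore $y = 0$.

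I expect the main obstacle to be producing the key identity $u(1-e) = b$: it is exactly this identity which, by routing all expansions through right distributivity, lets the classical Ehrlich ring-theoretic argument---which freely expands $(n - nf)(x+b) = nx - nfx + nb - nfb$ via left distributivity---carry over to the near-ring setting where only right distributivity is available.
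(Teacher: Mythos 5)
Your proposal is correct and follows essentially the same route as the paper: after normalizing $x=xax$ your candidate $u=x+b$ is exactly the paper's $u=xax+b$, and both arguments rest on $ab=ba=0$, the left morphic idempotents $ax$ and $xa$ via Theorem~\ref{217}, the identity $u(1-ax)=b$, and Proposition~\ref{64} to certify that $u$ is a unit. The only deviation is a harmless reordering in the verification that $(0:_lu)=\{0\}$ (you first right-multiply by $a$ to get $y\in N(1-xa)$, the paper first right-multiplies by $1-ax$ to get $y\in Na$), which changes nothing of substance.
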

\begin{proof}
Let $a\in N$, $a=axa$ for some $x\in N$, $Na = (0:_lb)$ and $Nb=(0:_la)$ for some $b\in N$.     Observe that
$a\in Na = (0:_lb)~\text{ and}~ b\in Nb=(0:_la)$ give rise to
                               \begin{equation*}\label{eq10}
                                ab=0=ba.
                               \end{equation*}
The elements  $xa$ and $ax$ are idempotents and left-morphic; hence by  Theorem~\ref{217}, $$n(1-ax)=n-nax=-nax+n$$ for all $n\in N$.
From $a=axa$ we have   $(a-axa)=(1-ax)a=0$.
This implies that $(1-ax)\in (0:_la)=Nb$.
Therefore, $(1-ax)=rb$ for some $r\in N$.  Let  $u:=xax+b$.   We show that  $u$ is a unit.  In view of Proposition~\ref{64}, it
 is enough to show that $(0:_lu)=0$.   Suppose that we have a $y\in (0:_lu)$.  Then
\begin{equation}\label{eq11}
 0=yu=y(xax+b).
\end{equation}
Right multiply (\ref{eq11}) by $(1-ax)$ to get $0=y(xax+b)(1-ax)=y(xax(1-ax)+b(1-ax)).$ Applying Theorem~\ref{217} and the fact that $ab=0=ba$  leads to
$0=y((xax-xaxax) +b-bax)=yb.$  Hence  $y\in (0:_lb)=Na$.  That is, $y=na$ for some $n\in N$.
Then  $$0=0a=(yu)a=y(xax+b)a=y(xa+0)=yxa=naxa=na=y.$$
This proves that $(0:_lu)=0$ and thus  $u$ is a unit in $N$.     Lastly, we  observe that
$$aua=a(xax+b)a=a(xaxa+ba)=a(xa+0)=axa=a.$$  Hence $a$ is unit-regular with $u$ as the middle unit; and this clearly proves that $N$ is unit-regular.
\end{proof}

\noindent The near-ring  in  Example~\ref{cccxx} is left morphic and unit-regular near-ring.  Clearly, for rings,  all morphic regular rings are unit-regular and the converse holds by \cite{ehrlich1976units}.   In Examples~\ref{20} and~\ref{20c}, we construct  unit-regular near-ring elements and unit-regular near-rings which are not left morphic and hence the analogue of Ehrlich's theorem does not, in general, hold for near-rings.

\begin{Ex}~\label{20}
Consider the  group $\displaystyle{\mathbb{Z}_3:=\{\bar{0},\bar{1},\bar{2}\}}$ and the set of all functions
$$M_0(\mathbb{Z}_3):=\left\{f\in M_0(\mathbb{Z}_3)\mid f:\mathbb{Z}_3\rightarrow\mathbb{Z}_3~\text{such that}~f(\bar{0})=\bar{0}\right\}.$$
Recall, $M_0(\mathbb{Z}_3)$ is a zero-symmetric near-ring with respect to componentwise addition $+$ and composition $\circ$ of functions.

\begin{claim}
If $N:=M_0(\mathbb{Z}_3)$, then the near-ring $(N,+,\circ)$ is  unit-regular but not left morphic.
\end{claim}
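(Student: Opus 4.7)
The plan is to establish the two assertions separately: first that $(N,+,\circ)$ is unit-regular, and then that it fails to be left morphic. Throughout, I will identify each $f\in N$ with the pair $(f(\bar{1}),f(\bar{2}))\in\mathbb{Z}_3\times\mathbb{Z}_3$, so that $|N|=9$, the additive group $(N,+)$ is the elementary abelian group $\mathbb{Z}_3\oplus\mathbb{Z}_3$, and the unit group is $U(N)=\{\mathrm{id},\tau\}$, where $\mathrm{id}=(\bar{1},\bar{2})$ and $\tau=(\bar{2},\bar{1})$ is the transposition of $\bar{1}$ and $\bar{2}$. Recall that multiplication is composition, so $(f\star g)(x)=f(g(x))$.

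For unit-regularity, I would carry out a short case analysis over the nine elements. Six of them, namely $(\bar{0},\bar{0})$, $(\bar{0},\bar{2})$, $(\bar{1},\bar{0})$, $(\bar{1},\bar{1})$, $(\bar{1},\bar{2})$ and $(\bar{2},\bar{2})$, are idempotents, and hence $f\circ\mathrm{id}\circ f=f\circ f=f$ certifies unit-regularity with $u=\mathrm{id}$. For the three remaining elements $(\bar{0},\bar{1})$, $(\bar{2},\bar{0})$ and $\tau$, a direct evaluation gives $f\circ\tau\circ f=f$, certifying unit-regularity with $u=\tau$. Thus every element of $N$ is unit-regular.

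For the failure of left-morphicness, by Definition~\ref{204} it suffices to exhibit a single element $a\in N$ for which $Na$ is not an $N$-ideal. I would choose $a=(\bar{1},\bar{1})$, the function that is constantly $\bar{1}$ on $\mathbb{Z}_3\setminus\{\bar{0}\}$. Since $(f\circ a)(\bar{1})=f(\bar{1})=(f\circ a)(\bar{2})$ for every $f\in N$, one obtains
\[
 Na=\{g\in N:g(\bar{1})=g(\bar{2})\}=\{(\bar{0},\bar{0}),(\bar{1},\bar{1}),(\bar{2},\bar{2})\}.
\]
I would then test the $N$-ideal condition with the explicit choice $l=a=(\bar{1},\bar{1})$, $m=\mathrm{id}=(\bar{1},\bar{2})$ and $r=(\bar{1},\bar{0})$. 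A direct evaluation yields $l+m=(\bar{2},\bar{0})$, whence $r\circ(l+m)=(\bar{0},\bar{0})$ while $r\circ m=r=(\bar{1},\bar{0})$, so $r(l+m)-rm=(\bar{2},\bar{0})$. This element is not in $Na$, so $Na$ fails to be an $N$-ideal, $a$ is not left morphic, and therefore $N$ is not a left morphic near-ring.

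The main obstacle is spotting a suitable witness for the failure of left-morphicness; once $a=(\bar{1},\bar{1})$ is identified, the rest of the verification reduces to mechanical arithmetic in $\mathbb{Z}_3$. The unit-regularity case analysis is routine given $|N|=9$, although it could also be packaged more conceptually by choosing, for each $f$, a right inverse of $f|_{\mathrm{Im}(f)\cap\{\bar{1},\bar{2}\}}$ and extending it to a bijection of $\mathbb{Z}_3$ fixing $\bar{0}$.
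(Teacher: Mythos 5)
Your proof is correct, and the two halves relate to the paper's argument differently. The unit-regularity part is essentially identical to the paper's: both enumerate the nine elements of $M_0(\mathbb{Z}_3)$, observe that six of them are idempotent (hence unit-regular via the identity) and that the remaining three are handled by the transposition $\tau$ (the paper's $f_8$). For the failure of left-morphicness you take a genuinely different route. The paper invokes its Theorem~\ref{217}: the idempotent $e=f_5=(\bar{1},\bar{1})$ satisfies $\bigl(e(1_N-e)\bigr)(\bar{2})=\bar{1}\neq\bar{0}$, so condition (4) of that theorem fails and $e$ cannot be left morphic. You instead go back to Definition~\ref{204} and show directly that $Na=\{(\bar{0},\bar{0}),(\bar{1},\bar{1}),(\bar{2},\bar{2})\}$ is not an $N$-ideal, exhibiting the explicit triple $l=(\bar{1},\bar{1})$, $m=\mathrm{id}$, $r=(\bar{1},\bar{0})$ with $r(l+m)-rm=(\bar{2},\bar{0})\notin Na$; I verified this computation and it is correct. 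Interestingly, both arguments single out the same witness $(\bar{1},\bar{1})$. The paper's check is a one-line evaluation once Theorem~\ref{217} is available, and it pinpoints \emph{which} structural property of idempotents fails; your argument is more elementary and self-contained, needing only the definition of an $N$-ideal, at the cost of computing $Na$ and hunting for a violating triple. Either is a complete proof of the claim.
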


\begin{proof}
We begin by investigating the elements of $N$ as follows.  Let $\bar{x}\in \mathbb{Z}_3,~f_i\in N$ with $i=1,\ldots, 9,$ then

\begin{center}
\begin{tabular}{ccc}
         $f_1(\bar{1})=\bar{0}$& and & $f_1(\bar{2})=\bar{0}$;   \\
         $f_2(\bar{1})=\bar{0}$& and & $f_2(\bar{2})=\bar{1}$;   \\
         $f_3(\bar{1})=\bar{0}$& and & $f_3(\bar{2})=\bar{2}$;   \\
         $f_4(\bar{1})=\bar{1}$& and & $f_4(\bar{2})=\bar{0}$;   \\
         $f_5(\bar{1})=\bar{1}$& and & $f_5(\bar{2})=\bar{1}$;   \\
         $f_6(\bar{1})=\bar{1}$& and & $f_6(\bar{2})=\bar{2}$;   \\
         $f_7(\bar{1})=\bar{2}$& and & $f_7(\bar{2})=\bar{0}$;   \\
         $f_8(\bar{1})=\bar{2}$& and & $f_8(\bar{2})=\bar{1}$;   \\
         $f_9(\bar{1})=\bar{2}$& and & $f_9(\bar{2})=\bar{2}$.   \\
   \end{tabular}
\end{center}

\noindent Note that $f_1=0_N$ is the zero map and $f_6=1_N$ is the identity map.  The only non-identity unity is $f_8$.  These three maps are trivially unit-regular.  Since $f_3,f_4,f_5$ and $f_9$ are idempotent, they are unit-regular and $f_2$ and $f_7$ are unit-regular using $f_8$.  Hence $N$ is a unit-regular near-ring.  But $N$ is not left morphic in view of Theorem~\ref{217} since $(f_5(1_N-f_5))(\overline{2})\neq \overline{0}$.
\end{proof}
\end{Ex}

\begin{Ex}~\label{20c}
Let  $D$  be a division ring    and  $R:=M_n(D)$ be the $n\times n$ matrix ring over $D,n\geq 2$.
$R$ is unit-regular \cite[Corollary 4.7]{goodearl1979neumann}.
For any non-zero $R$-module  $M$,
let $N: = R\times M$ and define addition and multiplication, respectively, in the following manner:
\begin{eqnarray*}
  \langle a_1, m_1\rangle + \langle a_2,m_2\rangle &=& \langle a_1 + a_2, m_1 + m_2\rangle \\
  \langle a_1, m_1\rangle\star\langle a_2, m_2\rangle &=& \langle a_1a_2, a_1m_2 + m_1\rangle.
\end{eqnarray*}

\begin{claim}
$(N,+,\star )$ is a unit-regular near-ring that is not left morphic.
\end{claim}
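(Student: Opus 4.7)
The plan is to verify three things in sequence using the twisted product $\langle a_1,m_1\rangle\star\langle a_2,m_2\rangle=\langle a_1 a_2,\,a_1 m_2+m_1\rangle$: first, that $(N,+,\star)$ is a right near-ring; second, that it is unit-regular; and third, that it fails to be left morphic. For the near-ring structure, $(N,+)$ is abelian with identity $\langle 0,0\rangle$, associativity of $\star$ reduces to associativity in $R$ together with the module axioms for ${}_RM$, and right distributivity follows from the distributive laws in $R$ and $M$. The element $\langle 1,0\rangle$ is a two-sided multiplicative identity. Observe that $\langle a,m\rangle\star\langle 0,0\rangle=\langle 0,m\rangle$, so $N$ is zero-symmetric only when $M=0$; this mild departure from the paper's standing convention is precisely what makes the example work.

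For unit-regularity I first identify the units: a direct check shows $\langle u,n\rangle\in N$ is a unit if and only if $u\in U(R)$, with two-sided inverse $\langle u^{-1},-u^{-1}n\rangle$. Given any $\langle a,m\rangle\in N$, use the unit-regularity of $R=M_n(D)$ to pick $u\in U(R)$ with $aua=a$, and set $U:=\langle u,-um\rangle\in U(N)$. A direct computation then gives
\beq
\langle a,m\rangle\star U\star\langle a,m\rangle
 &=& \langle au,\,-aum+m\rangle\star\langle a,m\rangle \\
 &=& \langle aua,\,aum-aum+m\rangle \;=\; \langle a,m\rangle,
\eeq
so every element of $N$ is unit-regular.

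For the failure of left morphicity, pick any non-zero $m_0\in M$ (available since $M\neq 0$) and set $a:=\langle 0,m_0\rangle$. A short calculation yields $Na=\{0\}\times M$, which one checks straightforwardly to be an $N$-ideal. If $a$ were left morphic, Lemma~\ref{1} would produce $b=\langle b_1,b_2\rangle\in N$ with $(0:_l b)=Na=\{0\}\times M$. But for any such $b$ and any $m\in M$,
\[
\langle 0,m\rangle\star\langle b_1,b_2\rangle=\langle 0\cdot b_1,\,0\cdot b_2+m\rangle=\langle 0,m\rangle,
\]
which is non-zero whenever $m\neq 0$. So $(0:_l b)\cap(\{0\}\times M)=\{\langle 0,0\rangle\}$, contradicting $(0:_l b)=\{0\}\times M$. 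Hence $a$ is not left morphic and neither is $N$. The main obstacle I anticipate is acknowledging that this example intentionally departs from the standing zero-symmetry convention, and confirming that the definition of left morphic and the characterization of Lemma~\ref{1} still apply as stated in this wider setting; once that is granted, the whole argument reduces to elementary coordinate-wise bookkeeping in $R\times M$.
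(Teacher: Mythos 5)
Your proof is correct, and its two halves track the paper's closely. The unit-regularity argument is essentially the authors': they too use the middle term $\langle u,-um\rangle$ with inverse $\langle u^{-1},m\rangle$, though they leave implicit that $u$ must first be chosen, via unit-regularity of $R=M_n(D)$, so that $aua=a$; your version makes that choice explicit, which tightens a genuinely imprecise spot in their write-up. For the failure of left morphicity the routes differ in detail but not in substance: the paper takes an arbitrary $\langle a,m\rangle$ with $m\neq 0$, extracts $ab=0=ba$, $n=-bm$ and $m=-an$ from the two annihilator conditions of Lemma~\ref{1}, and concludes $m=(ab)m=0$, a contradiction; you instead specialize to $\langle 0,m_0\rangle$, compute $Na=\{0\}\times M$ directly, and observe that $\langle 0,m\rangle\star b=\langle 0,m\rangle$ for every $b$, so no left annihilator can possibly equal $\{0\}\times M$. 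Your computation isolates most transparently why the failure of zero-symmetry is the real obstruction, while the paper's version establishes the slightly stronger fact that every element with nonzero $M$-component fails to be left morphic. Both arguments correctly note that the example deliberately steps outside the standing zero-symmetry convention, and both rely only on the implication of Lemma~\ref{1} that a left morphic $a$ admits $b$ with $Na=(0:_l b)$, which does not need zero-symmetry.
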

\begin{proof}
It can be verified that $N$ is a right near-ring with identity $\langle 1, 0\rangle$ which is not necessarily zero-symmetric.
We proceed to show that $N$ is unit-regular.   Let $\langle u, -um\rangle\in N$, where $u$ is a unit element in $R$ and $m\in M$.
Then for an arbitrary element $\langle a, m\rangle\in N, a\in R$, it is easy to verify that $\langle a, m\rangle\star \langle u, -um\rangle \star \langle a, m\rangle=\langle a, m\rangle$ and $\langle u, -um\rangle$ is a unit with inverse $\langle u^{-1}, m\rangle$.  This gives $N$  unit-regular.
To prove that $N$ is not left morphic
it suffices to give one element of $N$  that is not left-morphic.
Let $\langle a,m\rangle\in N,m\neq 0$.  If $\langle a,m\rangle$ is left morphic, then
\begin{equation*}
\langle a,m\rangle\in N\langle a,m\rangle= (0:_l\langle b,n\rangle)~\text{and}~\langle b,n\rangle\in N\langle b,n\rangle=(0:_l\langle a,m\rangle)
\end{equation*}
for some  $\langle b,n\rangle \in N$.   This gives $\langle a,m\rangle\star\langle b,n\rangle=\langle 0,0\rangle=\langle b,n\rangle\star\langle a,m\rangle$ which yields $ab=0=ba, -n=bm$ and $m=-an$.  Therefore, $m=-a(n)=-a(-bm)=(ab)m=0m=0$, which is a contradiction.  Thus $\langle a,m\rangle$ is not left morphic.
\end{proof}
\end{Ex}

\noindent By Proposition~\ref{ff} and Theorem \ref{62},  the picture of relationships among all these classes of near-rings is as follows:
$$
\left\{
\begin{array}{c}
                         {\rm Left~strongly} \\
                        {\rm regular~near\text{-}rings} \\
                     \end{array}
\right\}
\subsetneqq
\left\{\begin{array}{c}
                         {\rm Left~morphic} \\
                        {\rm regular~near\text{-}rings} \\
                       \end{array}
\right\}
\subsetneqq
\left\{\begin{array}{c}
                         {\rm Unit\text{-}} \\
                        {\rm regular~near\text{-}rings} \\
                       \end{array}
\right\}
.$$

\noindent However, if $N$ has IFP we get from~\cite[Proposition 4.2]{Groenewald} that such a unit-regular near-ring is left strongly regular.   Hence we have:

\begin{proposition}~\label{tttt}
Let $N$ be a near-ring which has IFP.  The following are equivalent:
\begin{enumerate}[(1)]
\item[\emph{(1)}] $N$ is left strongly regular.
\item[\emph{(2)}] $N$ is left morphic and regular.
\item[\emph{(3)}] $N$ is unit-regular.
\end{enumerate}
\end{proposition}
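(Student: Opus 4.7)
The plan is to close the loop $(1) \Rightarrow (2) \Rightarrow (3) \Rightarrow (1)$, using only the results already collected in the excerpt together with the cited IFP criterion for strong regularity. The IFP hypothesis does no work in the first two implications; it is needed only to return from unit-regularity to left strong regularity.

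First I would handle $(1) \Rightarrow (2)$. If $N$ is left strongly regular, then Proposition~\ref{ff} (the second one, proved just before Corollary~\ref{fff}) gives that $N$ is left morphic, and Lemma~\ref{hdt} gives that $N$ is regular. So $(2)$ holds, and notably no appeal to IFP is needed here.

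Next, $(2) \Rightarrow (3)$ is essentially a restatement of Theorem~\ref{62}: every left morphic regular near-ring is unit-regular. Again the IFP hypothesis is not used.

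Finally, for $(3) \Rightarrow (1)$, I would invoke the remark that immediately precedes the proposition: when $N$ has IFP, \cite[Proposition 4.2]{Groenewald} gives that a unit-regular near-ring is left strongly regular. Concretely, given $a\in N$ write $a = aua$ for some unit $u$; then setting $x := ua^{\,}$ (or applying the Groenewald criterion directly) one obtains a representation $a = xa^2$. The main subtlety, and the only place in the argument that requires actual care, is verifying that the hypothesis of \cite[Proposition 4.2]{Groenewald} is met — that is, that IFP plus unit-regularity supplies the annihilator symmetry needed to pass from $a=aua$ to $a = (ua)a^2\cdot(\text{something})$, which is exactly what that cited proposition packages. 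Since the paper has already flagged this reduction, I would simply cite it rather than reprove it, and close the chain $(1) \Rightarrow (2) \Rightarrow (3) \Rightarrow (1)$.

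The main obstacle, therefore, is not an internal one but an external one: ensuring the cited Groenewald result truly yields left strong regularity from unit-regularity under IFP. Once that is granted, the proposition is immediate from the three preceding items, and no further computation is required.
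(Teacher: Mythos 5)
Your proposal is correct and follows exactly the route the paper takes: $(1)\Rightarrow(2)$ via Proposition~\ref{ff} and Lemma~\ref{hdt}, $(2)\Rightarrow(3)$ via Theorem~\ref{62}, and $(3)\Rightarrow(1)$ via the cited \cite[Proposition 4.2]{Groenewald} under IFP, which is precisely what the paper's preceding remark packages. The only blemish is your parenthetical attempt to derive $a=xa^2$ by setting $x:=ua$ from $a=aua$ (that manipulation does not go through directly in a near-ring), but since you ultimately lean on the citation rather than that computation, the argument stands.
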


\begin{ack}
The authors are grateful to the referee for the helpful comments and suggestions that greatly improved the exposition.
This work was carried out at Makerere University with financial support from the Makerere-Sida Bilateral Program Phase IV, Project 316 ``Capacity building in Mathematics and its application''
\end{ack}

\end{document}